\newtheorem{theorem}{Theorem}[section]
\newtheorem{corollary}[theorem]{Corollary}
\newtheorem{lem}[theorem]{Lemma}
\newtheorem{proposition}[theorem]{Proposition}
\newtheorem{remark}[theorem]{Remark}
\newtheorem{Assumption}[theorem]{Assumption}
\newcommand{\R}{\mathbb{R}}
\newcommand{\C}{\mathbb{C}}
\newcommand{\N}{\mathbb{N}}
\newcommand{\norm}[1]{\left\Vert#1\right\Vert}
\newcommand{\abs}[1]{\left\vert#1\right\vert}
\begin{document}
\title{The inverse problem of two-state quantum systems with non-adiabatic static linear coupling}
\author{Andrii Khrabustovskyi, Imen Rassas and \'Eric Soccorsi}

\begin{abstract}
We consider the inverse problem of determining the coupling coefficients in a
two-state Schr\"odinger system. We prove a Lipschitz stability inequality for
the zeroth and first order coupling terms by finitely many partial lateral
measurements of the solution to the coupled Schr\"odinger equations.\\

\medskip
\noindent
{\bf  Keywords:} Inverse problem, stability estimate, coupled Schr\"odinger equations.\\

\medskip
\noindent
{\bf Mathematics subject classification 2010:} 35R30.
\end{abstract}

\maketitle

\section{Introduction}
Let $\Omega$ be a bounded domain of $\mathbb{R}^n$, $n \in \mathbb{N}=\{1,2,\ldots \}$, with smooth boundary $\Gamma=\partial\Omega$. Given $T\in (0,+\infty)$, we consider the following initial-boundary value problem (IBVP) for the coupled two-state Schr\"odinger equations in the unknowns $u^\pm=u^\pm(x,t)$,
\begin{equation}
\label{a1}
 \left\{
\begin{array}{ll}
-i\partial_t u^+ -\Delta u^+ + q^+u^+ + A\cdot\nabla u^- + pu^- =0 &\textrm{in}\ Q=\Omega \times (0,T), \\
-i\partial_t u^- -\Delta u^- + q^-u^- -A\cdot\nabla u^+ + pu^+ = 0 &\textrm{in}\ Q, \\
u^+(\cdot,0)=u_0^+,\ u^-(\cdot,0)=u_0^- &\textrm{in}\ \Omega,\\
u^+=g^+,\ u^-=g^-&\textrm{on}\ \Sigma=\Gamma \times (0,T),
\end{array}
\right.
\end{equation}
where $u_0^\pm$ and $g^\pm$ are suitable initial states and Dirichlet boundary conditions, respectively. Here, $A  : \Omega \to \R^n$, $p : \Omega \to \R$ and $q^\pm : \Omega \to \R$ are all real-valued.  In this article we are concerned by the stability issue in the inverse problem of determining the unknown functions $A$, $p$ and $q^\pm$ from a finite number of local boundary measurements of the solution to \eqref{a1}.\\

The IBVP \eqref{a1} describes the dynamics of a two-state (or two-level) quantum system. This terminology is justified by the fact that the quantum system modeled by \eqref{a1} can exist in any superposition of the two independent (in the sense that they can be physically distinguished) states $u^\pm$. 
As a matter of fact particles such as electrons, neutrinos or protons, are fermions and they have a two-state quantum mechanical label called spin. In Quantum Mechanics, the spin is an intrinsic form of angular momentum carried by elementary particles and the spin of fermions is half-integer. Namely, the electron is a spin-$1/2$ particle, i.e. the spin of the electron can have values $\hslash / 2$ (spin up) or $-\hslash /2$ (spin down), where $\hslash$ is the reduced Planck constant. Notice about this that for the sake of simplicity, the various physical constants appearing in \eqref{a1}, such as $\hslash$, the mass of the particle or its charge, are all taken equal to $1$ in this text.
In \eqref{a1} the dynamics of the two states $u^\pm$ are bound together through non-adiabatic linear coupling $p u^\mp  \pm A \cdot \nabla u^\mp$, see \cite{LSY} and the references therein for the relevance of non-adiabatic processes in physics or reactive chemistry. Gradient coupling 
appears also naturally in quantum fields theory (see \cite{BLMM,R}) or quantum cosmology (see \cite{DKT, BTW}), and it can sometimes be seen as a first-order approximation of nonlinear coupling (see \cite{Y}).

\subsection{What is known so far: A short bibliography}

There is a wide mathematical literature on inverse coefficient problems for the dynamic Schr\"odinger equation. Without tying to be exhaustive, one may mention \cite{BC, B, CKS, BKS2, KS}. In all these papers, an infinite number of boundary observations of the solution is required, but in \cite{BP,YY}, the real-valued electric potential of the Schr\"odinger equation is Lipschitz stably retrieved from a single partial boundary measurement. This result was improved in \cite{MOR} to smaller partial measurements and extended in \cite{HKSY} to complex-valued electric potentials. The method used in \cite{BP,YY, MOR, HKSY} is based essentially on an appropriate Carleman estimate. We refer to \cite{HKSY,T,YY} for actual examples of this inequality for the Schr\"odinger equation.
The idea of using a Carleman estimate for solving inverse problems first appeared in A. L. Bukhgeim and M. V. Klibanov paper \cite{BK}. Since its inception in 1981, this technique has then been widely and successfully applied by numerous authors to parabolic or hyperbolic systems, to the dynamic Schr\"odinger equation, and even to coupled systems of PDEs. See \cite{K} and references therein, for a complete review of multidimensional inverse problems solved by the Bukhgeim-Klibanov method.

Notice that in \cite{BP,YY,MOR,HKSY}, the data are measured on a part of the boundary that fulfills a geometric condition related to geometric optics condition insuring observability. This condition was relaxed in \cite{BC} for a  real-valued electric potential, under the assumption that the potential is known in the vicinity of the boundary. We refer to \cite{KPS1,KPS2,BKS1} for the same type of inverse problems but stated in an infinite cylindrical domain.
The problem of stably determining the space varying part (resp., static) magnetic potential of the autonomous (resp., non-autonomous) Schr\"odinger equation is treated in \cite{CS} (resp., \cite{HKSY}). In both cases, the $n$-th dimensional unknown magnetic vector potential, $n \geq 1$, is recovered from $n$ partial Neumann data, obtained by $n$-times suitably changing the initial condition attached at the magnetic Schr\"odinger equation.
All the above mentioned papers are concerned with the "one state" Schr\"odinger equation.  In \cite{LT} the authors show unique determination of the static electric coupling potential in a two state magnetic Schr\"odinger equation, by one partial measurement of the solution. Otherwise stated, assuming that the gradient coupling potential is known, \cite{LT} claims that knowledge of one partial Neumann data uniquely determines the scalar coupling potential. In the present paper, the framework is the same as in \cite{LT} but with uniformly zero magnetic field, and we investigate the stability issue in the inverse problem of identifying both the electric and the gradient coupling potentials, by finitely many partial boundary observations of the solution.

\subsection{Notations}

Throughout the entire text $x = (x_1,...,x_n)$ is a generic point of $\Omega$ and we set $\partial_i = \frac{\partial}{\partial x_i}$
and $\partial_{i,j}^2=\frac{\partial^2}{\partial x_i \partial x_j}$ for all $i,j=1,\ldots,n$.  We write $\partial_{i}^2$ instead of $\frac{\partial^2}{\partial x_i^2}$ and as usual, $\Delta$ denotes the Laplace operator, i.e. $\Delta=\partial_1^2+\ldots+\partial_n^2$. Next, for any multi-index $k = (k_1,\ldots,k_n) \in \N_0^n$, where $\N_0 = \{ 0 \} \cup \N$, we put $\abs{k}=k_1+\ldots+k_n$ and $\partial_x^k= \partial_1^{k_1} \ldots \partial_n^{k_n}$. Similarly, we write $\partial_t = \frac{\partial}{\partial t}$.

Further, the symbol $\cdot$ denotes the scalar product in $\C^m$, $m \geq 1$, and $\abs{\zeta}= \sqrt{\zeta \cdot \zeta}$ for all $\zeta \in \C^m$. For any row vector $a=(a_1,a_2,\ldots,a_m)$ we write $a^T$ for the transpose of $a$ in such a way that
$\nabla=(\partial_1,\ldots,\partial_n)^T$ is the gradient operator with respect to $x$. Further, $\nabla \cdot$ denotes the divergence operator and we set $\partial_\nu u = \nabla u \cdot \nu$, where $\nu$ is the outward normal vector to the boundary $\Gamma$.

Let us now introduce the following functional spaces. For any manifold $X$, we set 
$$ H^{r,s}(X \times (0,T)) = L^2(0,T;H^r(X)) \cap H^s(0,T;L^2(X)),\ r, s >0, $$
where $H^s(X)$ denotes the usual Sobolev space on $X$ of order $s$. For the sake of notational simplicity, we write $H^{r,s}(Q)$ (resp., $H^{r,s}(\Sigma)$) instead of $H^{r,s}(\Omega \times (0,T))$ (resp., $H^{r,s}(\Gamma \times (0,T))$). The space $H^{r,s}(X \times (0,T))$ is endowed with the norm $\norm{\cdot}_{H^{r,s}(X \times (0,T))}=\norm{\cdot}_{L^2(0,T;H^r(X))}+\norm{\cdot}_{H^s(0,T;L^2(X))}$ and we recall from \cite[Section 4, Theorem 2.1]{LM2} that for all $u \in H^{r,s}(X \times (0,T))$, $r,\ s>0$, and all $(j,k) \in \N_0^n \times\N_0$ such that $1-\frac{\abs{j}}{r}-\frac{k}{s}>0$, we have
\begin{equation}
\label{n1} 
\partial_x^j \partial_t^k u\in H^{\mu,\nu}(X \times (0,T))\ \mbox{with}\ \frac{\mu}{r}=\frac{\nu}{s}=1- \frac{\abs{j}}{s} - \frac{k}{s}, 
\end{equation}
and the estimate
\begin{equation}
\label{n2} 
\norm{\partial_x^j \partial_t^k u}_{H^{\mu,\nu}(X \times (0,T))} \leq \norm{u}_{H^{r,s}(X \times (0,T))}.
\end{equation}

\subsection{Main results}
Prior to examining the inverse problem under consideration in this article, we treat the well-posedness issue for the IBVP \eqref{a1}. Let $N$ be the unique natural number satisfying
$$ N \in \N \cap \left( \frac{n+2}{4}+1,\frac{n+2}{4}+ 2 \right]. $$
Then we have the following existence, uniqueness and regularity result for the solution to IBVP \eqref{a1}.

\begin{proposition}
\label{pr1}
Assume that $\Gamma$ is $C^{2(N+1)}$ and pick $M \in (0,+\infty)$. Let $A \in W^{2N+1,\infty}(\Omega,\mathbb{R}^n)$ verify $\nabla \cdot A=0$ a.e. in $\Omega$ and let
$p\in W^{2N+1,\infty}(\Omega,\mathbb{R})$ and $q^\pm \in W^{2N+1,\infty}(\Omega,\mathbb{R})$ be such a way that
$$ \norm{A}_{W^{2N+1,\infty}(\Omega)^n}+ \norm{p}_{W^{2N+1,\infty}(\Omega)}+\norm{q^+}_{ W^{2N+1,\infty}(\Omega)}+ \norm{q^-}_{W^{2N+1,\infty}(\Omega)} \leq M.$$
Then, for all
$g=(g^+,g^-)^T \in H^{2(N+7/4),N+7/4}(\Sigma)^2$ and all $u_0=(u_0^+,u_0^-)^T \in H^{2N+3}(\Omega)^2$, fulfilling the following compatibility conditions 
\begin{equation}
\label{d4}
\partial_t^\ell g(\cdot,0)=( -i )^\ell \left( \begin{array}{cc} -\Delta  + q^+ & A \cdot \nabla + p \\ -A \cdot \nabla +p & -\Delta+ q^- \end{array} \right)^\ell u_0\ \mbox{on}\ \Gamma,\ \ell=0,\cdots,N,
\end{equation}
the IBVP \eqref{a1} admits a unique solution $u=(u^+,u^-)^T \in W^{1,\infty}(0,T;W^{1,\infty}(\Omega)^2)$. Moreover,  there exists a positive constant $C$, depending only on $\Omega$, $T$, $M$, $u_0$ and $g$, such that 
\begin{equation}
\label{d4c}
\norm{u}_{W^{1,\infty}(0,T;W^{1,\infty}(\Omega)^2)} \leq C.
\end{equation}
\end{proposition}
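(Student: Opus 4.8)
The plan is to recast \eqref{a1} as a single matrix Schr\"odinger equation, to solve it by semigroup theory exploiting the self-adjointness produced by the condition $\nabla\cdot A=0$, and to reach the stated regularity through the embedding \eqref{n1}--\eqref{n2}. Putting $U=(u^+,u^-)^T$ and
$$ \mathcal{A}=\left( \begin{array}{cc} -\Delta + q^+ & A \cdot \nabla + p \\ -A \cdot \nabla +p & -\Delta+ q^- \end{array} \right),\qquad D(\mathcal{A})=\left(H^2(\Omega)\cap H^1_0(\Omega)\right)^2, $$
the IBVP \eqref{a1} reads $-i\partial_t U+\mathcal{A}U=0$ in $Q$, $U(\cdot,0)=u_0$, $U=g$ on $\Sigma$. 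The first step is to homogenize the lateral condition: using the trace theory for the anisotropic spaces $H^{r,s}$ recalled before \eqref{n1}, I would fix a lifting $G$ of $g$, at least as regular as the target solution space and matching the time derivatives $\partial_t^\ell g(\cdot,0)$ on $\Gamma$, so that $V=U-G$ solves $-i\partial_t V+\mathcal{A}V=F$ in $Q$, $V=0$ on $\Sigma$, $V(\cdot,0)=u_0-G(\cdot,0)$, with a source $F=i\partial_t G-\mathcal{A}G$ whose regularity is governed by that of $g$ and of the coefficients.

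The observation that drives the whole argument is that $\mathcal{A}$ is self-adjoint on $L^2(\Omega)^2$, and this is exactly where $\nabla\cdot A=0$ is used. Its diagonal part is self-adjoint since $q^\pm$ are real and bounded; the zeroth order coupling is symmetric since $p$ is real; and an integration by parts on $H^1_0(\Omega)$ gives $(A\cdot\nabla)^*=-(\nabla\cdot A)-A\cdot\nabla=-A\cdot\nabla$, so the adjoint of the upper off-diagonal entry $A\cdot\nabla+p$ is precisely the lower entry $-A\cdot\nabla+p$. As the first order coupling is $(-\Delta)$-bounded with relative bound zero, the Kato--Rellich theorem shows $\mathcal{A}$ is self-adjoint on $D(\mathcal{A})$, so $-i\mathcal{A}$ generates the strongly continuous unitary group $(e^{-it\mathcal{A}})_{t\in\R}$ on $L^2(\Omega)^2$. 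Duhamel's formula then yields the unique mild solution $V(t)=e^{-it\mathcal{A}}\big(u_0-G(\cdot,0)\big)+\int_0^t e^{-i(t-\tau)\mathcal{A}}F(\tau)\,d\tau$, whence $U=V+G$ is the unique solution of \eqref{a1}.

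To promote this to the claimed smoothness I would propagate regularity through the powers of $\mathcal{A}$. Differentiating the equation in time gives, at least formally, $\partial_t^\ell U(\cdot,0)=(-i)^\ell\mathcal{A}^\ell u_0$, and the compatibility conditions \eqref{d4} are exactly what forces these interior values to match the boundary data $\partial_t^\ell g(\cdot,0)$ at the corner $\Gamma\times\{0\}$ for $\ell=0,\ldots,N$, thereby removing the corner singularities that would otherwise cap the regularity. Since $e^{-it\mathcal{A}}$ commutes with $\mathcal{A}$ and is isometric on each domain, namely $\norm{\mathcal{A}^\ell e^{-it\mathcal{A}}\varphi}_{L^2}=\norm{\mathcal{A}^\ell\varphi}_{L^2}$, applying $\mathcal{A}^\ell$ and $\partial_t^\ell$ to Duhamel's formula and converting the resulting $L^2$ bounds into Sobolev estimates by elliptic regularity for $\mathcal{A}$ gives control of $U$ up to order $N+1$ in $\mathcal{A}$. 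Here the assumption $A,p,q^\pm\in W^{2N+1,\infty}(\Omega)$ is what permits $N+1$ such elliptic bootstraps, so that $D(\mathcal{A}^{N+1})\subset H^{2(N+1)}(\Omega)^2$, all the attending constants being controlled by $M$; this yields $U\in H^{2(N+1),N+1}(Q)^2$ together with an estimate $\norm{U}_{H^{2(N+1),N+1}(Q)^2}\le C$ with $C=C(\Omega,T,M,u_0,g)$.

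It then remains to convert this Sobolev regularity into the pointwise bound \eqref{d4c}. Applying \eqref{n1}--\eqref{n2} with $r=2(N+1)$, $s=N+1$, for every $(j,k)$ with $\abs{j}\le1$ and $k\le1$ the exponent $1-\frac{\abs{j}}{s}-\frac{k}{s}$ is positive and $\partial_x^j\partial_t^k U$ lands in an anisotropic space $H^{\mu,\nu}(Q)^2$; the defining range $N\in\big(\frac{n+2}{4}+1,\frac{n+2}{4}+2\big]$ is precisely the threshold making the worst case $\abs{j}=k=1$ satisfy $H^{\mu,\nu}(Q)\hookrightarrow L^\infty(Q)$, and collecting these embeddings over $\abs{j}\le1$, $k\le1$ gives $U\in W^{1,\infty}(0,T;W^{1,\infty}(\Omega)^2)$ and \eqref{d4c}. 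The step I expect to be the main obstacle is the regularity bookkeeping of the third paragraph: constructing a lifting $G$ with the right corner behaviour, verifying that \eqref{d4} exactly cancels the incompatibilities at $\Gamma\times\{0\}$, and tracking the anisotropic indices through each application of $\mathcal{A}$ so that nothing drops below $H^{2(N+1),N+1}$; the self-adjointness of $\mathcal{A}$ is what keeps the energy identities exact and all constants uniform in $M$.
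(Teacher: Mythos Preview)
Your proposal is correct and follows essentially the same route as the paper: self-adjointness of the matrix operator via Kato--Rellich (using $\nabla\cdot A=0$), a lifting to homogenize the boundary data, semigroup theory for existence and uniqueness, a regularity bootstrap through time-differentiation and elliptic regularity unlocked by the compatibility conditions \eqref{d4}, and a Sobolev embedding to reach $W^{1,\infty}(0,T;W^{1,\infty}(\Omega)^2)$. The only organizational differences are that the paper packages the bootstrap as an explicit induction on the regularity index (its Lemma~\ref{lm4}, landing in $\bigcap_{\ell=0}^{N+1} H^{N+1-\ell}(0,T;H^{2\ell}(\Omega)^2)$) and then reads off the final embedding directly from the slice $H^2(0,T;H^{2(N-1)}(\Omega)^2)$ together with $2(N-1)>n/2+1$, rather than going through \eqref{n1}--\eqref{n2} as you do.
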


Armed with Proposition \ref{pr1}, we may now state the main result of this article.
As a preamble we introduce the sets of admissible unknown coefficients $A$, $p$, $q^\pm$. To this purpose, for $M \in(0,\infty)$, $A_0\in W^{2N+1,\infty}(\Omega,\R^n)$ and $q_0 \in W^{2N+1,\infty}(\Omega,\R)$, we define
\begin{enumerate}[i)]
\item the set of admissible unknown gradient vector potentials as
$$\mathcal{A}_M(A_0)=\{A\in W^{2N+1,\infty}(\Omega,\mathbb{R}^n),\ \norm{A}_{W^{2N+1,\infty}(\Omega)^n}\leq M,\ \nabla\cdot A=0\ \textrm{and}\ \partial_x^k A= \partial_x^k A_0\ \textrm{on}\ \Gamma,\ \abs{k} \leq 2(N-1) \},$$
\item and the set of admissible unknown electric potentials as
$$\mathcal{Q}_M(q_0)=\{ q\in W^{2N+1,\infty}(\Omega,\mathbb{R}),\ \norm{q}_{W^{2N+1,\infty}(\Omega)}\leq M\ \textrm{and}\ \partial_x^k q= \partial_x^k q_0\ \textrm{on}\ \Gamma,\ \abs{k} \leq 2(N-1) \}.$$
\end{enumerate}
Next, $p_0$ and $q_0^\pm$ being fixed in $W^{2N+1,\infty}(\Omega,\R)$, the main result of this article is as follows.

\begin{theorem}
\label{th1}
Assume that $\Gamma$ is $C^{2(N+1)}$ and for $j=1,2$, let $A_j\in\mathcal{A}_M(A_0)$, $p_j\in\mathcal{Q}_M(p_0)$ and $q_j^\pm \in\mathcal{Q}_M(q^\pm_0)$. Then, there exist $n+2$ initial states $u_0^k=(u_0^{+,k},u_0^{-,k})^T \in H^{2N+3}(\Omega)^2$ and boundary conditions $g^k=(g^{+,k},g^{-,k})^T \in H^{2(N+7 \slash 4), N + 7 \slash 4}(\Sigma)^2$, $k=1,\ldots, n+2$, fufilling the compatibility conditions 
\begin{equation}
\label{cck}
\partial_t^\ell g^k(\cdot,0)=( -i )^\ell \left( \begin{array}{cc} -\Delta  + q_0^+ & A_0 \cdot \nabla + p_0 \\ -A_0 \cdot \nabla +p_0 & -\Delta+ q_0^- \end{array} \right)^\ell u_0^k\ \mbox{on}\ \Gamma,\ \ell=0,\cdots,N,
\end{equation}
such that we have
\begin{eqnarray*}
& & \norm{A_1-A_2}^2_{L^2(\Omega)^n}+\norm{p_1-p_2}^2_{L^2(\Omega)}+\norm{q_1^+-q_2^+}^2_{L^2(\Omega)}+\norm{q_1^- - q_2^-}^2_{L^2(\Omega)}\\
& \leq & C \sum_{k=1}^{n+2} \left( \norm{\partial_\nu \partial_t u_1^{-,k}- \partial_\nu \partial_t u_2^{-,k}}^2_{L^2(\Sigma_*)}
+\norm{\partial_\nu \partial_t u_1^{+,k} -\partial_\nu \partial_t u_2^{+,k}}^2_{L^2(\Sigma_*)} \right),
\end{eqnarray*}
for some positive constant $C$, depending only on $\Omega$, $T$, $M$ and $(u_0^{\pm,k},g^{\pm,k})$ for $k=1,\ldots,n+2$. Here, $u_j^k=(u_j^{+,k},u_j^{-,k})^T$, for $j=1,2$ and $k=1,\ldots,n+2$, is the solution to \eqref{a1} given by Proposition \ref{pr1}, where \hfill \break 
$(A_j,p_j,q_j^\pm,u_0^{\pm,k},g^{\pm,k})$ is substituted for $(A,p,q^\pm,u_0^{\pm},g^{\pm})$. 
\end{theorem}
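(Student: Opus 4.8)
The plan is to run the Bukhgeim--Klibanov scheme, adapted to the coupled system. First I would set $w^{\pm,k}=u_1^{\pm,k}-u_2^{\pm,k}$, $a=A_1-A_2$, $b=p_1-p_2$ and $c^\pm=q_1^\pm-q_2^\pm$, so that $\nabla\cdot a=0$ and $\partial_x^k a=\partial_x^k b=\partial_x^k c^\pm=0$ on $\Gamma$ for $|k|\le 2(N-1)$. Subtracting the two copies of \eqref{a1} shows that $w^k=(w^{+,k},w^{-,k})^T$ solves the coupled system with coefficients $(A_1,p_1,q_1^\pm)$, with zero initial and boundary data, and with right-hand side linear in $(a,b,c^+,c^-)$ (for instance $-c^+u_2^{+,k}-a\cdot\nabla u_2^{-,k}-b\,u_2^{-,k}$ in the first equation). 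Since the coefficients are time independent, $v^k:=\partial_t w^k$ solves the same system with source obtained by replacing $u_2^{\pm,k}$ by $\partial_t u_2^{\pm,k}$; evaluating the difference equations at $t=0$, where $w^k$ and its spatial derivatives vanish, yields the pointwise identities
\begin{align*}
i\,v^{+,k}(\cdot,0)&=c^+u_0^{+,k}+a\cdot\nabla u_0^{-,k}+b\,u_0^{-,k},\\
i\,v^{-,k}(\cdot,0)&=c^-u_0^{-,k}-a\cdot\nabla u_0^{+,k}+b\,u_0^{+,k}.
\end{align*}
It therefore suffices to bound $\sum_k\bigl(\norm{v^{+,k}(\cdot,0)}_{L^2(\Omega)}^2+\norm{v^{-,k}(\cdot,0)}_{L^2(\Omega)}^2\bigr)$ below by the unknowns and above by the measured data.

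The lower bound is linear algebra. I would choose the $n+2$ initial states so that the pointwise map $(a,b,c^+,c^-)\mapsto\{v^{\pm,k}(\cdot,0)\}_k$ admits a bounded left inverse uniform in $x$: taking $u_0^{+,1}\equiv1$, $u_0^{-,1}\equiv0$ recovers $c^+$ and $b$; taking $u_0^{-,2}\equiv1$, $u_0^{+,2}\equiv0$ recovers $c^-$; and for $m=1,\dots,n$, taking $u_0^{-,2+m}=x_m$, $u_0^{+,2+m}\equiv0$ recovers $a_m$ once $b$ is known (the factor $x_m$ staying bounded on $\Omega$). The boundary conditions $g^k$ are then selected so as to fulfil the compatibility conditions \eqref{cck}. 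Multiplying the resulting pointwise inequality by $e^{2s\varphi_0}$ and integrating gives, for every $s>0$,
\[
\int_\Omega e^{2s\varphi_0}\bigl(|a|^2+|b|^2+|c^+|^2+|c^-|^2\bigr)\,dx\le C\sum_{k=1}^{n+2}\int_\Omega e^{2s\varphi_0}\bigl(|v^{+,k}(\cdot,0)|^2+|v^{-,k}(\cdot,0)|^2\bigr)\,dx,
\]
where $\varphi_0(x)=\varphi(x,0)$ is the trace at $t=0$ of the Carleman weight introduced below. The vanishing of the boundary jets of $a,b,c^\pm$ built into $\mathcal A_M(A_0)$ and $\mathcal Q_M$ is what lets these states meet the high-order conditions \eqref{cck} while keeping $w^k$, hence $v^k$, regular up to $\Gamma$ (this is where the $W^{2N+1,\infty}$ and $H^{2N+3}$ hypotheses enter, via Proposition \ref{pr1}).

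For the upper bound I would invoke a global Carleman estimate for $-i\partial_t-\Delta$ on $\Omega\times(-T,T)$, after extending $v^k$ to negative times by $v^k(x,t)\mapsto\overline{v^k(x,-t)}$, which preserves the system because all coefficients are real. With weight $\varphi=e^{\lambda\psi}$, $\psi(x,t)=|x-x_0|^2-\beta t^2$ and $x_0\notin\overline\Omega$ chosen so that the boundary term is supported on $\Sigma_*$, the estimate controls $\int e^{2s\varphi}\bigl(s|\nabla v^k|^2+s^3|v^k|^2\bigr)$ by $\int e^{2s\varphi}|(-i\partial_t-\Delta)v^k|^2$ plus $s\int_{\Sigma_*}e^{2s\varphi}|\partial_\nu v^k|^2$; the first-order coupling $A_1\cdot\nabla v^{\mp,k}$ (skew-adjoint since $\nabla\cdot A_1=0$) and all zeroth-order terms are absorbed for $s$ large. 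The source is bounded by $C|v^k|^2+C(|a|^2+|b|^2+|c^+|^2+|c^-|^2)$ using \eqref{d4c}, and since $a,b,c^\pm$ are time independent with $\varphi$ maximal at $t=0$, Laplace's method gives $\int_{-T}^{T}e^{2s\varphi(x,t)}\,dt\le Cs^{-1/2}e^{2s\varphi_0(x)}$, so the source contributes at most $Cs^{-1/2}\int_\Omega e^{2s\varphi_0}(|a|^2+|b|^2+|c^+|^2+|c^-|^2)$. A standard energy identity (fundamental theorem of calculus in $t$, combined with the equation) bounds $\int_\Omega e^{2s\varphi_0}|v^k(\cdot,0)|^2$ by the Carleman left-hand side, hence by its right-hand side. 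Chaining this with the coercivity estimate and absorbing the $Cs^{-1/2}$ term into the left-hand side for $s\ge s_0$ leaves only the boundary integrals; fixing $s=s_0$, bounding $e^{2s\varphi}$ above on $\Sigma_*$ and $e^{2s\varphi_0}$ below on $\Omega$ by positive constants, and identifying $\partial_\nu v^k=\partial_\nu\partial_t(u_1^k-u_2^k)$ on $\Sigma_*$ produces the asserted inequality.

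The main obstacle is the joint design of the $n+2$ initial states: they must make the pointwise observation map invertible with an $x$-uniform bounded inverse and simultaneously satisfy the order-$N$ compatibility conditions \eqref{cck} while furnishing enough regularity for $v^k=\partial_t w^k$ to enter the Carleman estimate. Verifying that the time-extended $v^k$ genuinely solves a Schr\"odinger system to which the cited Carleman inequality applies, and that the contributions at $t=\pm T$ are negligible thanks to the Gaussian decay of the weight, is the remaining technical point.
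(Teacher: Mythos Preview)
Your scheme is exactly the paper's: linearize, set $v^k=\partial_t(u_1^k-u_2^k)$, extend to $(-T,T)$ by conjugate reflection, apply a global Carleman estimate componentwise to $v^{\pm,k}$, and select $n+2$ initial states so that the values $v^{\pm,k}(\cdot,0)$ determine $(a,b,c^+,c^-)$ pointwise. Your explicit triangular choice ($u_0^{\pm,k}$ built from $1$, $0$ and the coordinates $x_m$) is a concrete instance of what the paper does: it takes $u_0^{+,1}=0$, $u_0^{-,1}$ a nonzero constant (and the swapped version for $k=2$) to control $p,q^\pm$ modulo $A$, and then for $k=3,\dots,n+2$ requires the Jacobians $U_0^\pm=(\partial_\ell u_0^{\pm,k+2})$ to be uniformly nondegenerate so as to control $A$ modulo $p,q^\pm$; the loop is closed via the $s^{-3/2}$ prefactor from the Carleman inequality. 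Your triangular inversion avoids this back-and-forth, at the price of fixing very specific $u_0^{\pm,k}$.

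Two technical points need repair. First, the reflection must be $v^k(x,t)=-\overline{v^k(x,-t)}$ for $t<0$, not $+\overline{v^k(x,-t)}$: since $v^{\pm,k}(\cdot,0)=-i(\text{real})$ is purely imaginary, your extension is discontinuous at $t=0$ and does not solve the reflected system. Second, and more seriously, the weight $\psi(x,t)=|x-x_0|^2-\beta t^2$ is a wave-equation weight; it is not known to produce a global Carleman estimate for $-i\partial_t-\Delta$, and in any case the contributions at $t=\pm T$ do not vanish by ``Gaussian decay'' (the weight is merely smaller there, not zero). The paper uses instead the Baudouin--Puel weight $\eta(x,t)=(e^{2\lambda K}-e^{\lambda\beta(x)})/((T+t)(T-t))$, singular at $t=\pm T$, so that $e^{-s\eta}\to 0$ there and no time-boundary terms appear; its Corollary~\ref{cor1} then plays exactly the role of your ``energy identity plus Carleman'' step, with the bonus that the monotonicity $e^{-s\eta(x,t)}\le e^{-s\eta(x,0)}$ replaces your Laplace argument and directly yields the negative power of $s$ needed for absorption.
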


Theorem \ref{th1} claims Lipschitz stable recovery of $n+3$ unknown functions ($p$ and $q^\pm$ and the $n$ components of $A$) by $n+2$ local boundary measurements of the solution $u=(u^+,u^-)$ to \eqref{a1}. Bearing in mind that $A$ is divergence free and that its trace on $\Gamma$ is prescribed, this amounts to saying that $n+2$ unknown scalar functions can be stably retrieved by the same number of local Neumann data. From this viewpoint, the result of Theorem \ref{th1} is thus optimal.

\subsection{Outline}

The derivation of Proposition \ref{pr1} can be found in Section \ref{sec-dp} while Section \ref{sec-ip} contains the proof of Theorem \ref{th1}. Finally, several technical results used for establishing that the elliptic part of the Schr\"odiger equation \eqref{a1} is self-adjoint in $L^2(\Omega)^2$, are collected in the Appendix.

\section{Analysis of the direct problem}
\label{sec-dp}
In this section we prove Proposition \ref{pr1}.

\subsection{Preliminaries: Self-adjointness and basic regularity}

\subsubsection{Self-adjointness} 
In this section we assume that $\Gamma$ is $C^2$, that $A \in L^\infty(\Omega,\R^n)$ is gradient free, i.e. that $\nabla \cdot A =0$ a.e. in $\Omega$, and that $p \in L^\infty(\Omega,\R)$ and $q^\pm \in L^\infty(\Omega,\R)$. 

Let $\Delta^D$ denote the Dirichlet-Laplacian in $L^2(\Omega)$, with domain $\mathcal{D}_0=H_0^1(\Omega) \cap H^2(\Omega)$. Since $\Gamma$ is $C^2$ then it is well known that $\Delta^D$ is self-adjoint in $L^2(\Omega)$. As a consequence, the operator
$$ \Delta^D u = (\Delta^D u^+, \Delta^D u^-)^T,\ u=(u^+,u^-)^T \in \mathcal{D}_0^2, $$
is self-adjoint in $L^2(\Omega)^2$.
Put
$$
\tilde{A} =\begin{pmatrix}
0&A \\
-A&0\\
\end{pmatrix},\ 
\tilde{p}=\begin{pmatrix}
0&p\\
p&0\\
\end{pmatrix}\ \mbox{and}\
\tilde{q}=\begin{pmatrix}
q^+&0 \\
0&q^-\\
\end{pmatrix}.
$$
Since $p\in L^\infty(\Omega,\R)$ (resp., $q^\pm L^\infty(\Omega,\R)$) then the multiplication operator by $\tilde{p}$ (resp., $\tilde{q}$), defined by $\tilde{p} u = (p u^-, p u^+)^T$ (resp.,  $\tilde{q} u = (q^- u^+, q^- u^-)^T$) for all $u=(u^+,u^-)^T \in L^2(\Omega)^2$, and denoted by $\tilde{p}$ (resp., $\tilde{q}$) is symmetric in $L^2(\Omega)^2$. Similarly, since $\nabla \cdot A=0$, then we infer from the Stokes formula that the operator 
$$ \tilde{A} \cdot \nabla u =  (A \cdot \nabla u^-,-A \cdot \nabla u^+)^T,\ u=(u^+,u^-)^T \in H_0^1(\Omega)^2, $$
is symmetric in $L^2(\Omega)^2$ as well (see Appendix A in Section \ref{sec-aA}). As a consequence, the operator $\tilde{A} \cdot \nabla +\tilde{p} + \tilde{q}$, with domain $H_0^1(\Omega)^2$, is symmetric in $L^2(\Omega)^2$. Moreover, we know from Appendix B (see Section \ref{sec-aB}) that $\tilde{A} \cdot \nabla + \tilde{p} + \tilde{q}$ is $\Delta^D$-bounded in $L^2(\Omega)^2$, with relative bound zero: For any $\epsilon \in (0,1)$, there exists $C_\epsilon>0$, depending only on $\epsilon$, $\norm{A}_{L^{\infty}(\Omega)}$, $\norm{p}_{L^{\infty}(\Omega)}$ and $\norm{q^\pm}_{L^{\infty}(\Omega)}$, such that 
$$ \norm{(\tilde{A} \cdot \nabla +\tilde{p} + \tilde{q}) u}_{L^2(\Omega)^2} \leq \epsilon \norm{\Delta^D u}_{L^2(\Omega)^2} + C_\epsilon \norm{u}_{L^2(\Omega)^2},\ u \in \mathcal{D}_0^2. $$
Therefore, the Kato-Rellich Theorem (see \cite[Theorem X.12]{RS2}) yields the following:

\begin{lem}
\label{lm2}
Assume that  $\Gamma$ is $C^2$, that $A \in L^\infty(\Omega,\R^n)$ fulfills $\nabla \cdot A =0$ a.e. in $\Omega$, and that $p \in L^\infty(\Omega,\R)$ and $q^\pm \in L^\infty(\Omega,\R)$. Then the operator 
$H(A,q^\pm,p)= -\Delta^D  + \tilde{q} + \tilde{A} \cdot \nabla + \tilde{p}$, with domain $D(H(A,q^\pm,p))=\mathcal{D}_0^2$, is self-adjoint in $L^2(\Omega)^2$.
\end{lem}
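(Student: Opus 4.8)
The plan is to realize $H(A,q^\pm,p)$ as a perturbation of the matrix Dirichlet Laplacian and to apply the Kato--Rellich theorem \cite[Theorem X.12]{RS2}. I write $H(A,q^\pm,p) = -\Delta^D + B$, where $B := \tilde{A} \cdot \nabla + \tilde{p} + \tilde{q}$, and both operators carry the common domain $\mathcal{D}_0^2$. The unperturbed operator $-\Delta^D$ acting componentwise is self-adjoint on $\mathcal{D}_0^2$, which was already recorded above and is inherited from the self-adjointness of the scalar Dirichlet Laplacian on $\mathcal{D}_0 = H_0^1(\Omega) \cap H^2(\Omega)$, valid since $\Gamma$ is $C^2$. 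It thus remains only to verify the two hypotheses of Kato--Rellich: that $B$ is symmetric on $\mathcal{D}_0^2$, and that it is $\Delta^D$-bounded with relative bound zero.

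For the symmetry I would treat the three summands separately. The multiplication operators $\tilde{p}$ and $\tilde{q}$ are bounded on $L^2(\Omega)^2$, and as their defining matrices are real and symmetric, owing to $p$ and $q^\pm$ being real-valued, they are self-adjoint and in particular symmetric. For the first-order part, each scalar operator $A \cdot \nabla$ is skew-symmetric on $H_0^1(\Omega)$ by the Stokes formula together with $\nabla \cdot A = 0$; the off-diagonal antisymmetric placement of the blocks in $\tilde{A}$ then converts this skew-symmetry into symmetry of $\tilde{A} \cdot \nabla$ on $H_0^1(\Omega)^2$. This computation is the content of Appendix A, which I would cite rather than reproduce.

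The infinitesimal relative boundedness is the technical heart of the argument, and I expect it to be the main obstacle. One needs, for each $\epsilon \in (0,1)$, a constant $C_\epsilon$ with $\norm{Bu}_{L^2(\Omega)^2} \leq \epsilon \norm{\Delta^D u}_{L^2(\Omega)^2} + C_\epsilon \norm{u}_{L^2(\Omega)^2}$ for all $u \in \mathcal{D}_0^2$. The bounded multiplication terms contribute only to $C_\epsilon$ and are harmless, so the delicate point is the gradient term, which reduces to the elliptic interpolation estimate $\norm{\nabla u}_{L^2} \leq \epsilon \norm{\Delta u}_{L^2} + C_\epsilon \norm{u}_{L^2}$ on $\mathcal{D}_0$. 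I would obtain the latter by integrating by parts to bound $\norm{\nabla u}_{L^2}^2$ by $\norm{\Delta u}_{L^2}\norm{u}_{L^2}$, and then splitting off an arbitrarily small multiple of $\norm{\Delta u}_{L^2}$ through Young's inequality; this is the content of Appendix B. Once symmetry and relative bound zero are established, the Kato--Rellich theorem applies verbatim and yields that $H(A,q^\pm,p)$ is self-adjoint on $\mathcal{D}_0^2$, which closes the proof.
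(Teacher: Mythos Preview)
Your proposal is correct and follows essentially the same route as the paper: decompose $H(A,q^\pm,p)$ as $-\Delta^D$ plus the symmetric perturbation $\tilde{A}\cdot\nabla+\tilde{p}+\tilde{q}$, verify symmetry via Appendix~A and infinitesimal $\Delta^D$-boundedness via the integration-by-parts/Young argument of Appendix~B, and conclude by Kato--Rellich. There is nothing to add.
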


Otherwise stated, $H(A,q^\pm,p)$ is the self-adjoint realization in $L^2(\Omega)^2$ of the formal operator acting in $(C_0^\infty(\Omega)^2)'$, $\mathcal{H}(A,q^\pm,p)= - \Delta + \tilde{q} + \tilde{A} \cdot \nabla + \tilde{p}$, endowed with homogeneous boundary conditions on $\Gamma$. We point out for further use that $u=(u^+,u^-)^T$ solves \eqref{a1} may be equivalently rewritten as $u$ is solution to the IBVP
\begin{equation}
\label{a1b}
\left\{ \begin{array}{lc} -i \partial_t u + \mathcal{H}(A,q^\pm,p) u  =0 & \mbox{in}\ Q, \\ u = g & \mbox{on}\ \Sigma, \\ u(\cdot,0)=u_0 & \mbox{in}\ \Omega. \end{array}\right.
\end{equation}

\subsubsection{Existence, uniqueness and basic regularity result}
In this section we establish the following existence and uniqueness result by adapting the analysis carried out in \cite[Section 2]{HKSY} to the coupled system \eqref{a1}. 


\begin{lem}
\label{lm3}
Assume that $\Gamma$, $A$, $p$ and $q^\pm$ are the same as in Lemma \ref{lm2}. Then for all $g=(g^+,g^-)^T \in H^{7/2,7/4}(\Sigma)^2$ and all $u_0=(u_0^+,u_0^-)^T \in H^3(\Omega)^2$, the IBVP \eqref{a1} admits a unique solution $u=(u^+,u^-)^T \in H^{2,1}(Q)^2$ to \eqref{a1}. Moreover, there exists a constant $C$, depending only on $\Omega$, $T$ and $M$, such that 
\begin{equation}
\label{d0}
\norm{u} _{H^{2,1}(Q)^2} \leq C \left( \norm{g}_{H^{7/2,7/4}(\Sigma)^2} + \norm{u_0}_{H^3(\Omega)^2} \right). 
\end{equation}
\end{lem}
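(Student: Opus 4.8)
\emph{Proof strategy.} The plan is to recast \eqref{a1} in the operator form \eqref{a1b}, reduce it to homogeneous lateral data, and solve the resulting inhomogeneous evolution equation by means of the unitary group generated by the self-adjoint operator $H=H(A,q^\pm,p)$ of Lemma \ref{lm2}. By Stone's theorem $H$ generates a strongly continuous unitary group $\set{e^{-itH} : t \in \R}$ on $L^2(\Omega)^2$, a feature that simultaneously yields existence, uniqueness, and the $L^2$-conservation underlying the energy estimates.

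First I would lift the boundary term. Since the lateral trace operator maps $H^{4,2}(Q)^2$ continuously \emph{onto} $H^{7/2,7/4}(\Sigma)^2$ and possesses a continuous right inverse (Lions--Magenes), I fix $G\in H^{4,2}(Q)^2$ with $G=g$ on $\Sigma$ and $\norm{G}_{H^{4,2}(Q)^2}\leq C\norm{g}_{H^{7/2,7/4}(\Sigma)^2}$. Setting $u=v+G$, the unknown $v$ must solve $-i\partial_t v+\mathcal H v=F$ in $Q$, with $v=0$ on $\Sigma$, $v(\cdot,0)=v_0:=u_0-G(\cdot,0)$, and source $F:=i\partial_t G-\mathcal H G$. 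Using \eqref{n1}--\eqref{n2} together with $\nabla\cdot A=0$ and $p,q^\pm\in L^\infty(\Omega)$, one checks that $\partial_t G$ and each term of $\mathcal H G$ belong to $H^1(0,T;L^2(\Omega)^2)$, whence $F\in H^1(0,T;L^2(\Omega)^2)$; moreover $v_0\in H^2(\Omega)^2$, and $v_0\in H_0^1(\Omega)^2$ as soon as the (zeroth order) compatibility $u_0=g(\cdot,0)$ holds on $\Gamma$, so that $v_0\in\mathcal D_0^2=D(H)$.

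Duhamel's formula then gives the unique solution $v(t)=e^{-itH}v_0+i\int_0^t e^{-i(t-s)H}F(s)\,ds$, uniqueness in $H^{2,1}(Q)^2$ following from unitarity via the energy identity $\tfrac{d}{dt}\norm{w}_{L^2(\Omega)^2}^2=0$ for the difference $w$ of two solutions. The crux is the $H^{2,1}(Q)^2$ regularity. Because $\Gamma$ is $C^2$, elliptic regularity renders the graph norm of $H$ equivalent to the $H^2(\Omega)^2$-norm on $\mathcal D_0^2$, and $H$ has compact resolvent; I therefore expand $F$ and $v$ along an orthonormal eigenbasis $(\phi_k)$ of $H$, with eigenvalues $(\mu_k)$. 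The term $e^{-itH}v_0$ keeps the $L^2$-norms of $v_0$ and $Hv_0$ constant in $t$, hence lies in $L^2(0,T;\mathcal D_0^2)\cap H^1(0,T;L^2(\Omega)^2)\subset H^{2,1}(Q)^2$. For the Duhamel term I integrate by parts in $s$ in each mode, trading the factor $\mu_k$ for $F_k(t)$, $F_k(0)$ and $\partial_s F_k$; summing in $k$ produces $\sum_k\mu_k^2\norm{\,\cdot\,}_{L^2(0,T)}^2\lesssim\norm{F}_{H^1(0,T;L^2)}^2$, i.e. $v\in L^2(0,T;\mathcal D_0^2)$, and the identity $\partial_t v=-i(\mathcal H v-F)$ then places $\partial_t v$ in $L^2(Q)^2$. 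Adding back $G\in H^{4,2}(Q)^2\subset H^{2,1}(Q)^2$ yields $u\in H^{2,1}(Q)^2$.

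Collecting the lifting bound, the group estimates and the eigenfunction estimates delivers \eqref{d0} with $C=C(\Omega,T,M)$. I expect the main obstacle to be precisely this spatial $H^2$-regularity of the Duhamel integral: the Schr\"odinger group is non-smoothing, so --- unlike the parabolic situation, where maximal regularity is automatic --- the two missing spatial derivatives cannot be gained for free and must be recovered from the time regularity of $F$ through the mode-by-mode integration by parts, which hinges on the identification of $\mathcal D_0^2$ with the domain of the self-adjoint operator $H$ furnished by Lemma \ref{lm2}.
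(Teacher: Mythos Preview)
Your approach is correct and is essentially the paper's: lift via Lions--Magenes to homogeneous Dirichlet data, then solve the resulting inhomogeneous evolution equation by exploiting the self-adjointness of $H(A,q^\pm,p)$ from Lemma~\ref{lm2}. Two cosmetic differences are worth noting. First, the paper lifts $g$ \emph{and} $u_0$ simultaneously into a single $G\in H^{4,2}(Q)^2$ (so the reduced problem \eqref{d2} has zero initial data), whereas you lift only $g$ and carry a nonzero $v_0\in D(H)$; both require the same zeroth-order corner compatibility $u_0|_\Gamma=g(\cdot,0)$, implicit in either lifting. Second, for the $H^{2,1}$ regularity of $v$ the paper simply invokes \cite[Lemma~2.1]{CKS} (an abstract result for m-dissipative generators with $H^1$-in-time source), while you unpack that lemma by hand via the eigenfunction expansion and the mode-by-mode integration by parts in $s$. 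Your argument is precisely the content of the cited lemma, so nothing is lost or gained either way.
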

\begin{proof}
Since $g=(g^+,g^-)^T \in H^{7/2,7/4}(\Sigma)^2$ and $u_0=(u_0^+,u_0^-)^T \in H^3(\Omega)^2$ then, in virtue of \cite[Section 4, Theorem 2.1]{LM2}, there exists $G =(G^+,G^-)^T \in H^{4,2}(Q)^2$ such that $G=g$ on $\Sigma$ and $G(\cdot,0)=u_0$ in $\Omega$. Moreover, we have \begin{equation}
\label{d1}
\norm{G}_{H^{4,2}(Q)^2} \leq C \left( \norm{u_0}_{H^3(\Omega)^2} + \norm{g}_{H^{7/2,7/4}(\Sigma)^2} \right),
\end{equation}
for some constant $C>0$, depending only on $\Omega$, $T$ and $M$.

Evidently, $u=(u^+,u^-)^T$ is solution to \eqref{a1} if and only if $v=u-G=(u^+-G^+,u^- - G^-)^T$ is solution to the following Cauchy problem
\begin{equation}
\label{d2}
 \left\{
\begin{array}{ll}
-i \partial_t v + H(A,q^\pm,p) v =f &\textrm{in}\ Q, \\
v(\cdot,0)=0 &\textrm{in}\ \Omega,
\end{array}
\right.
\end{equation}
where $f=(f^+,f^-)^T=-(-i \partial_t - \Delta + \tilde{q}) G$. Further, with reference to \cite[Proposition 2.3]{LM2} we have $\partial_t G \in H^{2,1}(Q)^2$ with $\norm{\partial_t G}_{H^{2,1}(Q)^2} \leq C \norm{G}_{H^{4,2}(Q)^2}$, whence $f \in H^1(0,T;L^2(\Omega)^2)$ and
\begin{equation}
\label{d3}
\norm{f}_{H^1(0,T;L^2(\Omega)^2)} \leq C \left( \norm{\partial_t G}_{H^{2,1}(Q)^2} +\norm{G}_{H^{2,1}(Q)^2} \right)
\leq C \norm{G}_{H^{4,2}(Q)^2},
\end{equation}
for some constant $C$ depending only on $\Omega$, $T$ and $M$.

Moreover, since the operator $-iH(A,q^\pm,p)$ is m-dissipative in $L^2(\Omega)^2$, by Lemma \ref{lm2}, we deduce from \eqref{d2} upon applying \cite[Lemma 2.1]{CKS} (with $X=L^2(\Omega)^2$, $U=-iH(A,q^\pm,p)$ and $B=0$) that there exists a unique solution $v \in H^{2,1}(Q)^2$ to \eqref{d2}, such that 
$$
\norm{v}_{H^{2,1}(Q)^2} \leq C \norm{f}_{H^1(0,T;L^2(\Omega)^2)}.
$$
Finally, bearing in mind that $u=v+G$, we obtain \eqref{d0} by combining the above estimate with \eqref{d1} and \eqref{d3}.
\end{proof}

Armed with Lemma \ref{lm2} we may now seek higher regularity for the solution to the IBVP \eqref{a1} upon imposing more restrictive conditions on $\Gamma$, $A$, $p$, $q^\pm$, $u_0$ and $g$.

\subsection{Improved regularity and proof of Proposition \ref{pr1}}

\subsubsection{Improved regularity result}

The statement we are aiming for can be formulated as follows.

\begin{lem}
\label{lm4}
Fix $m\in\mathbb{N}$ and assume that $\Gamma$ is $C^{2(m+1)}$. Let $A\in W^{2m+1,\infty}(\Omega,\mathbb{R}^n)$ verify $\nabla \cdot A=0$ a.e. in $\Omega$ and pick
$p\in W^{2m+1,\infty}(\Omega,\mathbb{R})$ and $q^\pm \in W^{2m+1,\infty}(\Omega,\mathbb{R})$ in such a way that
$$ \norm{A}_{W^{2m+1,\infty}(\Omega)^n}+ \norm{p}_{W^{2m+1,\infty}(\Omega)}+\norm{q^+}_{ W^{2m+1,\infty}(\Omega)}+ \norm{q^-}_{W^{2m+1,\infty}(\Omega)} \leq M,$$
for some {\it a priori} fixed positive constant $M$. Then for all
$g=(g^+,g^-)^T \in H^{2(m+7/4),m+7/4}(\Sigma)^2$ and all $u_0=(u_0^+,u_0^-)^T \in H^{2m+3}(\Omega)^2$ fulfilling the compatibility conditions 
$$
\partial_t^\ell g(\cdot,0)=( -i )^\ell \mathcal{H}(A,q^\pm,p)^\ell u_0\ \mbox{on}\ \Gamma,\ \ell=0,\cdots,m,
$$
there exists a unique solution $u \in \bigcap\limits_{\ell=0}^{m+1} H^{m+1-\ell}(0,T;H^{2\ell}(\Omega)^2)$ to \eqref{d2}. Moreover $u$ satisfies the estimate 
\begin{equation}
\label{d4a}
\sum_{\ell=0}^{m+1} \norm{u}_{H^{m+1-\ell}(0,T;H^{2\ell}(\Omega)^2)} \leq C \left( \norm{u_0}_{H^{2m+3}(\Omega)^2}+\norm{g}_{H^{2(m+7/4),m+7/4}(\Sigma)^2} \right),
\end{equation}
where $C$ is a positive constant depending only on $\Omega$, $T$ and $M$.
\end{lem}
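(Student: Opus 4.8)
The plan is to argue by induction on $m$, the base case $m=0$ being exactly Lemma \ref{lm3}: indeed $H^{2,1}(Q)^2 = \bigcap_{\ell=0}^{1} H^{1-\ell}(0,T;H^{2\ell}(\Omega)^2)$, and the data spaces $H^{7/2,7/4}(\Sigma)^2$ and $H^3(\Omega)^2$ of Lemma \ref{lm3} are precisely those of the present statement with $m=0$. The driving idea for the inductive step is that, because the coefficients of $\mathcal{H}(A,q^\pm,p)$ do not depend on $t$, the time derivative $w=\partial_t u$ formally solves an initial-boundary value problem of the very same form \eqref{a1}, but with data shifted down by one regularity level. Assuming the claim at order $m-1$, I would apply the induction hypothesis to $w$ and then recover the single remaining piece of spatial regularity of $u$ by elliptic estimates.

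First I would set up the problem for $w=\partial_t u$. Differentiating \eqref{a1} in time (a step made rigorous by combining the regularity already furnished by Lemma \ref{lm3} with a difference-quotient argument) shows that $w$ solves $-i\partial_t w + \mathcal{H}(A,q^\pm,p)w = 0$ with boundary value $\partial_t g$ and initial value $w(\cdot,0)=-i\,\mathcal{H}(A,q^\pm,p)u_0$. I would then check that these new data meet the hypotheses at order $m-1$. The vector $w(\cdot,0)=-i\,\mathcal{H}(A,q^\pm,p)u_0$ lies in $H^{2m+1}(\Omega)^2=H^{2(m-1)+3}(\Omega)^2$ because $u_0\in H^{2m+3}(\Omega)^2$ and the coefficients belong to $W^{2m+1,\infty}(\Omega)$; the anisotropic differentiation rule \eqref{n1}--\eqref{n2}, applied on $X=\Gamma$ with one $t$-derivative, places $\partial_t g$ in $H^{2((m-1)+7/4),(m-1)+7/4}(\Sigma)^2$; and the assumed compatibility conditions of orders $1,\dots,m$ for $u$ translate into those of orders $0,\dots,m-1$ for $w$, since on $\Gamma$ one has $\partial_t^{\ell+1}g(\cdot,0)=(-i)^{\ell+1}\mathcal{H}(A,q^\pm,p)^{\ell+1}u_0=(-i)^\ell\mathcal{H}(A,q^\pm,p)^\ell w(\cdot,0)$. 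The induction hypothesis then yields $w\in\bigcap_{\ell=0}^{m}H^{m-\ell}(0,T;H^{2\ell}(\Omega)^2)$ with the corresponding estimate, which is to say $\partial_t^{\,j}u\in L^2(0,T;H^{2(m+1-j)}(\Omega)^2)$ for every $j=1,\dots,m+1$.

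This supplies every component of the target regularity for $u$ except the top spatial one, $u\in L^2(0,T;H^{2(m+1)}(\Omega)^2)$, which I would obtain by elliptic regularity applied for a.e.\ $t$ to the stationary Dirichlet problem $-\Delta u(\cdot,t)=i\partial_t u(\cdot,t)-(\tilde{q}+\tilde{A}\cdot\nabla+\tilde{p})u(\cdot,t)$ in $\Omega$ with $u(\cdot,t)=g(\cdot,t)$ on $\Gamma$. Starting from $u\in L^2(0,T;H^2(\Omega)^2)$ (Lemma \ref{lm3}) and using $\partial_t u\in L^2(0,T;H^{2m}(\Omega)^2)$ from the previous step, I would bootstrap one spatial derivative at a time: if $u\in L^2(0,T;H^{s}(\Omega)^2)$ with $s\le 2m+1$, then the right-hand side lies in $L^2(0,T;H^{s-1}(\Omega)^2)$, the boundary datum $g(\cdot,t)\in H^{2m+7/2}(\Gamma)$ is more than smooth enough, and elliptic regularity promotes $u$ to $L^2(0,T;H^{s+1}(\Omega)^2)$; iterating up to $s=2m+1$ produces the desired $H^{2m+2}$ regularity. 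Collecting the estimate coming from the induction hypothesis for $w$ together with these elliptic bounds yields \eqref{d4a}, while uniqueness is already contained in Lemma \ref{lm3}.

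The main obstacle I anticipate is this elliptic bootstrap in the presence of the first-order gradient coupling $\tilde{A}\cdot\nabla$: unlike the zeroth-order terms, this term costs a derivative, so each elliptic iteration gains only one order of regularity, and one must verify at every stage that the products of the $W^{2m+1,\infty}$ coefficients with the relevant derivatives of $u$ stay in the correct Sobolev space all the way up to $H^{2m+2}$ — it is exactly here that the regularity threshold $2m+1$ on the coefficients is consumed. A secondary, more bookkeeping-type difficulty is the rigorous justification of the time differentiation producing the problem for $w$ and the careful matching of the anisotropic trace indices for $\partial_t g$ through \eqref{n1}--\eqref{n2}.
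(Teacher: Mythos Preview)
Your proposal is correct and follows essentially the same route as the paper: induction on $m$, time-differentiation to obtain an IBVP of the same form for $w=\partial_t u$, application of the induction hypothesis to $w$, and elliptic regularity to recover the top spatial regularity of $u$. The paper differs only in bookkeeping: it takes $m=1$ as the base case (since $\mathbb N=\{1,2,\dots\}$ here) and, in the inductive step, also applies the induction hypothesis directly to $u$, so that the elliptic bootstrap starts from $u\in L^2(0,T;H^{2m}(\Omega)^2)$ and only two applications of elliptic regularity are needed rather than the full climb from $H^2$; your longer bootstrap is equally valid.
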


\begin{proof}
We will prove the result by induction on $m \in \N$.

\noindent 1) {\it Base case}. We first consider the case $m=1$. Put $z=(z^+,z^-)=\partial_t u$, where $u=(u^+,u^-)^T \in H^{2,1}(Q)^2$ is the solution to \eqref{a1} given by Lemma \ref{lm3}. Then upon differentiating \eqref{a1b} with respect to $t$, we get that
\begin{equation}
\label{d4b}
\left\{
\begin{array}{ll}
-i\partial_t z + \mathcal{H}(A,q^\pm,p) z =0 & \textrm{in}\ Q, \\
z=\partial_t g &\textrm{on}\ \Sigma,\\
z(\cdot,0)=z_0 &\textrm{in}\ \Omega,
\end{array}
\right.
\end{equation}
with $z_0=-i(-\Delta+\tilde{q}+\tilde{A} \cdot \nabla+\tilde{p})u_0 \in H^3(\Omega)^2$. Since
$\partial_t g(\cdot,0)=z_0$ on $\Gamma$, by \eqref{d4}, and since $\partial_t g \in H^{7/2,7/4}(\Sigma)^2$ with $\norm{\partial_t g}_{H^{7/2,7/4}(\Sigma)^2} \leq  \norm{g}_{H^{11/2,11/4}(\Sigma)^2}$ from \eqref{n1}-\eqref{n2}, then Lemma \ref{lm3} yields $z \in H^{2,1}(Q)^2$ and
\begin{eqnarray}
\norm{z}_{H^{2,1}(Q)^2} &\leq & C \left( \norm{z_0}_{H^3(\Omega)^2}+ \norm{\partial_t g}_{H^{7/2,7/4}(\Sigma)^2} \right) \nonumber \\
&\leq & C \left( \norm{u_0}_{H^5(\Omega)^2} + \norm{g}_{H^{11/2,11/4}(\Sigma)^2} \right). \label{d5}
\end{eqnarray}
Therefore we have obtained $\partial_t^\ell u \in H^{2,1}(Q)^2$ for $\ell=0,1$, whence
\begin{equation}
\label{d5b}
u \in \cap_{\ell=0}^1 H^{2-\ell}(0,T;H^{2\ell}(\Omega)^2).
\end{equation}
Moreover, we derive from the two estimates \eqref{d0} and \eqref{d5} that
\begin{equation}
\label{d5c}
\sum_{\ell=0}^1 \norm{u}_{H^{2-\ell}(0,T;H^{2\ell}(\Omega)^2)}
\leq C \left(\norm{u_0}_{H^5(\Omega)^2}+\ \norm{g}_{H^{11/2,11/4}(\Sigma)^2} \right).
\end{equation}
It remains to show that $u \in L^2(0,T;H^4(\Omega)^2)$ verifies $\norm{u}_{L^2(0,T;H^4(\Omega)^2)} \leq C  \left(\norm{u_0}_{H^5(\Omega)^2}+\ \norm{g}_{H^{11/2,11/4}(\Sigma)^2} \right)$. This can be done by applying the elliptic regularity theorem twice. Indeed, for a.e. $t \in (0,T)$ we infer from the IBVP \eqref{a1b} that $u(\cdot,t)$ is solution to the following elliptic system
\begin{equation}
\label{d6}
 \left\{
\begin{array}{ll}
\Delta u(\cdot,t)=h(\cdot,t) & \textrm{in}\ \Omega, \\
u(\cdot,t)=g(\cdot,t) &\textrm{on}\ \Gamma,
\end{array}
\right.
\end{equation}
where $h= -iz - \tilde{q} u  - \tilde{A} \cdot \nabla u+ \tilde{p} u$. As $h(\cdot,t) \in H^1(\Omega)^2$ and $g(\cdot,t)\in H^{11/2}(\Gamma)^2  \subset H^{5/2}(\Gamma)^2$, then we have $u(\cdot,t)\in H^3(\Omega)^2$ by elliptic regularity, with
\begin{eqnarray}
\norm{u(\cdot,t)}_{H^3(\Omega)^2} &\leq & C \left( \norm{h(\cdot,t)}_{H^1(\Omega)^2} + \norm{g(\cdot,t)}_{H^{5/2}(\Gamma)^2} \right) \nonumber \\
& \leq & C \left( \norm{z(\cdot,t)}_{H^1(\Omega)^2}+\norm{u(\cdot,t)}_{H^2(\Omega)^2}+\norm{g(\cdot,t)}_{H^{11/2}(\Gamma)^2} \right). \label{d7}
\end{eqnarray}
As a consequence we have $h(\cdot,t)\in H^2(\Omega)^2$ for a.e. $t\in(0,T)$, and since $g(\cdot,t) \in H^{11/2}(\Gamma)^2  \subset H^{7/2}(\Gamma)^2$, then the elliptic regularity theorem entails $u(\cdot,t) \in H^4(\Omega)^2$ and
\begin{eqnarray*}
\norm{u(\cdot,t)}_{H^4(\Omega)^2} & \leq & C \left( \norm{h(\cdot,t)}_{H^2(\Omega)^2}+\norm{g(\cdot,t)}_{H^{7/2}(\Gamma)^2} \right) \nonumber \\
& \leq & C \left( \norm{z(\cdot,t)}_{H^2(\Omega)^2}+ \norm{u(\cdot,t)}_{H^3(\Omega)^2}+ \norm{g(\cdot,t)}_{H^{11/2}(\Gamma)^2} \right).
\end{eqnarray*} 
Putting this together with \eqref{d7} we obtain that
\begin{equation}
\label{d8}
\norm{u(\cdot,t)}_{H^4(\Omega)^2} \leq C \left( \norm{z(\cdot,t)}_{H^2(\Omega)^2}+ \norm{u(\cdot,t)}_{H^2(\Omega)^2}+ \norm{g(\cdot,t)}_{H^{11/2}(\Gamma)^2} \right).
\end{equation} 
Next, since $u$ and $z$ are both in $L^2(0,T;H^2(\Omega)^2)$ and since $g \in L^2(0,T; H^{11/2}(\Gamma)^2)$, then \eqref{d8} yields $u \in L^2(0,T;H^4(\Omega)^2)$ and 
$$ \norm{u}_{L^2(0,T;H^4(\Omega)^2)} \leq C \left( \norm{u}_{H^1(0,T;H^2(\Omega)^2)}+ \norm{g}_{L^2(0,T;H^{11/2}(\Gamma)^2)} \right). $$
In light of \eqref{d5c} this entails that
$$ \norm{u}_{L^2(0,T;H^4(\Omega)^2)} \leq C \left( \norm{u_0}_{H^5(\Omega)^2}+ \norm{g}_{H^{11/2,11/4}(\Sigma)^2} \right). $$
Now the result for $m=1$ follows from this and \eqref{d5b}-\eqref{d5c}.\\

\noindent 2) {\it Induction step}. Let us suppose that the claim of Lemma \ref{lm4} holds for some $m\in\mathbb{N}$. We shall prove that it is still true for $m+1$, provided $\Gamma$ is $C^{2m+3}$, $A \in W^{2m+3,\infty}(\Omega,\R^n)$, $q^\pm \in W^{2m+3,\infty}(\Omega,\R)$, $p \in W^{2m+3,\infty}(\Omega,\R)$ and 
$(g,u_0) \in H^{2(m+11 \slash 4),m+11/4}(\Sigma)^2 \times H^{2m+5}(\Omega)^2$ verify the compatibility condition \eqref{d4} where $m+1$ is substituted for $m$, i.e.
\begin{equation}
\label{d9}
\partial_t^\ell g(\cdot,0)=( -i )^\ell \mathcal{H}(A,q^\pm,p)^\ell u_0\ \mbox{on}\ \Gamma,\ \ell=0,\cdots,m+1,
\end{equation}

Let $u$ denote the $\bigcap_{\ell=0}^{m+1}  H^{m+1-\ell} \left( 0,T;H^{2\ell}(\Omega)^2 \right)$-solution to \eqref{a1b} satisfying \eqref{d4a}. Then in a similar way to the base case, we differentiate the system \eqref{a1b} with respect to the time variable and get that $z=\partial_t u$ solves \eqref{d4b} with 
$z_0=-i \mathcal{H}(A,q^\pm,p) u_0 \in H^{2m+3}(\Omega)^2$. Moreover, we know from \eqref{n1} that $\partial_t g \in H^{2(m+7 \slash 4), m+ 7 \slash 4}(\Sigma)^2$ and from \eqref{d9} that
\begin{eqnarray*}
\partial_t^\ell (\partial_t g)(\cdot,0) & = & \partial_t^{\ell+1} g(\cdot,0) = (-i)^{\ell+1} \mathcal{H}(A,q^\pm,p)^{\ell+1} u_0 \\
& = & (-i)^\ell \mathcal{H}(A,q^\pm,p)^{\ell} z_0,\ \ell=0,\ldots,m.
\end{eqnarray*}
Therefore, we have $z \in \bigcap_{\ell=0}^{m+1}  H^{m+1-\ell} \left( 0,T;H^{2\ell}(\Omega)^2 \right)$ and
\begin{eqnarray}
\sum_{\ell=0}^{m+1} \norm{z}_{H^{m+1-\ell}(0,T;H^{2\ell}(\Omega)^2)} & \leq & C \left( \norm{z_0}_{H^{2m+3}(\Omega)^2}+\norm{\partial_t g}_{H^{2(m+7/4),m+7/4}(\Sigma)^2} \right) \nonumber \\
& \leq & C \left( \norm{u_0}_{H^{2m+5}(\Omega)^2}+\norm{g}_{H^{2(m+11/4),m+11/4}(\Sigma)^2} \right), \label{d9b}
\end{eqnarray}
by induction hypothesis (and upon using the estimate $\norm{\partial_t g}_{H^{2(m+7/4),m+7/4}(\Sigma)^2} \leq  \norm{g}_{H^{2(m+11 \slash 4), m + 11 \slash 4}(\Sigma)^2}$, arising from  \eqref{n2}).
Since $u \in \bigcap_{\ell=0}^{m+1}  H^{m+1-\ell} \left( 0,T;H^{2\ell}(\Omega)^2 \right)$ then this may be equivalently rewritten as
\begin{equation}
\label{d10}
u \in \bigcap_{\ell=0}^{m+1}  H^{m+2-\ell} (0,T;H^{2\ell}(\Omega)^2)
\end{equation}
and we infer from \eqref{d4a} and \eqref{d9b} that
\begin{equation}
\label{d11}
\sum_{\ell=0}^{m+1} \norm{u}_{H^{m+2-\ell}(0,T;H^{2\ell}(\Omega)^2)} \leq C \left( \norm{u_0}_{H^{2m+5}(\Omega)^2}+\norm{g}_{H^{2(m+11/4),m+11/4}(\Sigma)^2} \right). 
\end{equation}
Thus we are left with the task of showing that $u \in L^2(0,T;H^{2(m+2)}(\Omega)^2)$ and that
$$ \norm{u}_{L^2(0,T;H^{2(m+2)}(\Omega)^2)} \leq C\left( \norm{u_0}_{H^{2m+5}(\Omega)^2}+\norm{g}_{H^{2(m+11/4),m+11/4}(\Sigma)^2} \right).$$
This can be done with the elliptic regularity theorem upon using for a.e. $t \in (0,T)$ that $u(\cdot,t)$ is solution to \eqref{d6} with $h(\cdot,t)= -iz(\cdot,t)- \tilde{q} u(\cdot,t)  - \tilde{A} \cdot \nabla u(\cdot,t)+ \tilde{p} u(\cdot,t) \in H^{2m+1}(\Omega)^2$ and $g(\cdot,t) \in H^{2(m+11 \slash 4)}(\Gamma)^2 \subset H^{2m+5 \slash 2}(\Gamma)^2$. We get that $u(\cdot,t)\in H^{2m+3}(\Omega)^2$ and
\begin{eqnarray}
\norm{u(\cdot,t)}_{H^{2m+3}(\Omega)^2} &\leq & C \left( \norm{h(\cdot,t)}_{H^{2m+1}(\Omega)^2} + \norm{g(\cdot,t)}_{H^{2m+5 \slash 2}(\Gamma)^2} \right) \nonumber \\
& \leq & C \left( \norm{z(\cdot,t)}_{H^{2m+1}(\Omega)^2}+\norm{u(\cdot,t)}_{H^{2m+2}(\Omega)^2}+\norm{g(\cdot,t)}_{H^{2(m+11 \slash 4)}(\Gamma)^2} \right). \label{d12}
\end{eqnarray}
Therefore we have $h(\cdot,t)\in H^{2(m+1)}(\Omega)^2$ for a.e. $t \in (0,T)$, and since $g(\cdot,t) \in H^{2(m+11 \slash 4)}(\Gamma)^2 \subset H^{2m+7 \slash 2}(\Gamma)^2$, we obtain by elliptic regularity that $u(\cdot,t) \in H^{2(m+2)}(\Omega)^2$ and
\begin{eqnarray*}
\norm{u(\cdot,t)}_{H^{2(m+2)}(\Omega)^2} & \leq & C \left( \norm{h(\cdot,t)}_{H^{2(m+1)}(\Omega)^2}+\norm{g(\cdot,t)}_{H^{2m+7 \slash 2}(\Gamma)^2} \right) \nonumber \\
& \leq & C \left( \norm{z(\cdot,t)}_{H^{2(m+1)}(\Omega)^2}+ \norm{u(\cdot,t)}_{H^{2m+3}(\Omega)^2}+ \norm{g(\cdot,t)}_{H^{2(m+ 11 \slash 4)}(\Gamma)^2} \right).
\end{eqnarray*}
Putting this last estimate with \eqref{d12}, we end up getting that 
\begin{equation}
\label{d13} 
\norm{u(\cdot,t)}_{H^{2(m+2)}(\Omega)^2} \leq C \left( \norm{z(\cdot,t)}_{H^{2(m+1)}(\Omega)^2}+ \norm{u(\cdot,t)}_{H^{2(m+1)}(\Omega)^2}+ \norm{g(\cdot,t)}_{H^{2(m+11 \slash 4)}(\Gamma)^2} \right).
\end{equation} 
Further, since $u$ and $z$ are in $L^2(0,T;H^{2(m+1)}(\Omega)^2)$ and since $g \in L^2(0,T; H^{2(m+11 \slash 4)}(\Gamma)^2)$, we infer from \eqref{d13} that
$u \in L^2(0,T;H^{2(m+2)}(\Omega)^2)$ verifies
$$
\norm{u}_{L^2(0,T;H^{2(m+2)}(\Omega)^2)}  \leq  C \left( \norm{u}_{H^1(0,T;H^{2(m+1)}(\Omega)^2)} + \norm{g}_{L^2(0,T;H^{2(m+11/4)}(\Gamma)^2)} \right).
$$ 
In view of \eqref{d11} this entails that
$$
\norm{u}_{L^2(0,T;H^{2(m+2)}(\Omega)^2)}  \leq C \left( \norm{u_0}_{H^{2m+5}(\Omega)^2}+ \norm{g}_{H^{2(m+11/4),m+11/4}(\Sigma)^2} \right), 
$$
which, together with \eqref{d10}-\eqref{d11} yield the statement of Lemma \ref{lm4} for $m+1$ .
\end{proof}

Having established Lemma \ref{lm4}, we turn now to proving Proposition \ref{pr1}. 

\subsubsection{Proof of Proposition \ref{pr1}}
We apply Lemma \ref{lm4} with $m=N$ and get a unique solution $u$ to \eqref{a1} within the space $H^2(0,T;H^{2(N-1)}(\Omega)^2)$. Since $2(N-1) > \frac{n}{2} +1$ from the very definition of $N$, then $u \in W^{1,\infty}(0,T;W^{1,\infty}(\Omega)^2)$ by the Sobolev embedding theorem. Moreover, \eqref{d4a} yields
\begin{eqnarray*}
\norm{u}_{W^{1,\infty}(0,T;W^{1,\infty}(\Omega)^2)} & \leq & \norm{u}_{H^2(0,T;H^{2(N-1)}(\Omega)^2)} \\
& \leq & C \left( \norm{u_0}_{H^{2N+3}(\Omega)^2}+\norm{g}_{H^{2(N+7/4),N+7/4}(\Sigma)^2} \right),
\end{eqnarray*}
for some constant $C$ depending only on $\Omega$, $T$ and $M$. This proves the desired result.



\section{Analysis of the inverse problem}
\label{sec-ip}

This section contains the proof of Theorem \ref{th1}.

\subsection{Preliminaries: Carleman estimate and all that}

In this section, we establish in Corollary \ref{cor1} a weighted energy estimate for the Schr\"odinger equation, which the main tool used in the derivation of Theorem \ref{th1}. This inequality is a byproduct of the global Carleman estimate for the Schr\"odinger operator 
of \cite[Proposition 1]{BP}, that we recall in Proposition \ref{pr2}. 
In order to state this inequality we consider a function
$\tilde{\beta} \in C^4(\overline{\Omega},\R_+)$ and an open subset $\Gamma_* \subset \Gamma$ fulfilling the following conditions:
\begin{Assumption} \hfill \break \vspace*{-.4cm}
\label{c1}
\begin{enumerate}[(1)]
\item There exists a constant $c>0$ such that the estimate $\abs{\nabla \tilde{\beta}(x)}\geq c$ holds for all 
$x\in\Omega$;
\item $\partial_\nu\tilde{\beta}(x)=\nabla\tilde{\beta}(x)\cdot\nu(x)<0$ for all $x \in \partial \Omega \setminus \Gamma_*$, where $\nu$ is the outward unit normal vector to $\Gamma$;
\item There exists $\Lambda_1>0$ and $\epsilon>0$ such that $\lambda\vert\nabla\tilde{\beta}(x)\cdot\zeta\vert^2+D^2\tilde{\beta}(x,\zeta,\zeta)\geq\epsilon\vert\zeta\vert^2$ for all $\zeta\in\mathbb{R}^{n}$, $x\in\Omega$ and $\lambda>\Lambda_1$, where $D^2\tilde{\beta}(x)= \left( \frac{\partial^2\tilde{\beta}(x)}{\partial x_i\partial x_j} \right)_{1\leq i,j\leq n}$ and $D^2\tilde{\beta}(x,\zeta,\zeta)$ denotes the $\mathbb{R}^{n}$-scalar product of $D^2\tilde{\beta}(x)\zeta$ with $\zeta$.
\end{enumerate}
\end{Assumption}

\begin{remark}
We stress out that there exist actual $\tilde{\beta}$ and $\Gamma_*$ satisfying Assumption \ref{c1}. As a matter of fact, for all $x_0\in\mathbb{R}^{n}\setminus\overline{\Omega}$ fixed, this the case of the function $\tilde{\beta}(x)=\vert x-x_0\vert^2$ and any open subset $\Gamma_* \subset \Gamma$ containing $\{x\in \Gamma;\ (x-x_0)\cdot\nu(x)\geq 0\}$.
\end{remark}

Next, we put 
\begin{equation}
\label{c2}
\beta(x)=\tilde{\beta}(x)+r \Vert\tilde{\beta}\Vert_{L^\infty(\Omega)},\ x \in \overline{\Omega},
\end{equation}
for some $r >1$ and $K=\norm{\beta}_{L^\infty(\Omega)}$, and we set 
\begin{equation}
\label{c3}
\varphi(x,t)=\frac{e^{2\lambda\beta(x)}}{(T+t)(T-t)}\ \mbox{and}\ \eta(x,t)=\frac{e^{2\lambda K}-e^{\lambda\beta(x)}}{(T+t)(T-t)},\ (x,t)\in \tilde{Q}=\Omega \times (-T,T),
\end{equation}
for some $\lambda>0$. Further, for all $s>0$, we introduce the two following operators acting in $(C^\infty_0)^\prime(\tilde{Q}):$
\begin{equation}
\label{c4}
M_1=i\partial_t+\Delta+s^2\vert\nabla\eta\vert^2\ \mbox{and}\ M_2=is\eta^\prime+2s\nabla\eta\cdot\nabla+s(\Delta\eta). 
\end{equation}
It can be checked that $M_1$ (resp., $M_2$) is the adjoint (resp., skew-adjoint) part of the operator $e^{-s \eta} L e^{s \eta}$, where
$L=i\partial_t+\Delta$. Then the global Carleman estimate borrowed from \cite[Proposition 1]{BP} is as follows.

\begin{proposition}
\label{pr2}
Let $\tilde{\beta}$ and $\Gamma_*$ fufill Assumption \ref{c1}, let $\beta$, $\varphi$ and $\eta$ be given by \eqref{c2}-\eqref{c3}, and let the operators $M_j$, $j=1,2$, be defined by \eqref{c4}. Then there are two constants $s_0>1$ and $C_0>0$, depending only on $\Omega$, $T$ and $\Gamma_*$, such that the estimate
\begin{eqnarray*}
& & s\Vert e^{-s\eta}\nabla w\Vert^2_{L^2(\tilde{Q})^n}+s^3\Vert e^{-s\eta}w\Vert^2_{L^2(\tilde{Q})}+\sum\limits_{j=1,2}\Vert M_je^{-s\eta}w\Vert^2_{L^2(\tilde{Q})} \\
&\leq & C_0 \left( s\Vert e^{-s\eta}\varphi^{1/2}(\partial_\nu\beta)^{1/2}\partial_\nu w\Vert^2_{L^2(\tilde{\Sigma}_*)}+\Vert e^{-s\eta}Lw\Vert^2_{L^2(\tilde{Q})} \right),
\end{eqnarray*} 
holds for all $s > s_0$ and all $w\in L^2(-T,T;H^1_0(\Omega))$ satisfying $L w\in L^2(\tilde{Q})$ and $\partial_\nu w\in L^2(\tilde{\Sigma_*})$, where $\tilde{\Sigma}_*= \Gamma_* \times (-T,T)$.
\end{proposition}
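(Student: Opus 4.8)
The plan is to establish the estimate by the standard conjugation-and-commutator method for Carleman inequalities. First I would introduce the conjugated function $v=e^{-s\eta}w$ and compute the conjugated operator $P_s v:=e^{-s\eta}L(e^{s\eta}v)$. Letting the $t$- and $x$-derivatives of $L=i\partial_t+\Delta$ fall on the exponential, a direct calculation gives $P_s=M_1+M_2$ with $M_1,M_2$ exactly the operators in \eqref{c4}; as recorded in the text, $M_1$ is the formally self-adjoint part and $M_2$ the skew-adjoint part of $P_s$. The feature that makes the whole scheme work is that the weight $\eta$ blows up as $t\to\pm T$, since the denominator $(T+t)(T-t)$ vanishes there; consequently $v=e^{-s\eta}w$ and all its derivatives vanish at $t=\pm T$, which removes every boundary contribution in the time variable and leaves only lateral boundary terms on $\tilde{\Sigma}=\Gamma\times(-T,T)$. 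Note also that $e^{-s\eta}Lw=P_sv=M_1v+M_2v$, so bounding $\norm{e^{-s\eta}Lw}^2_{L^2(\tilde{Q})}=\norm{M_1v+M_2v}^2_{L^2(\tilde{Q})}$ from below is the whole task.

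Next I would expand
$\norm{M_1v+M_2v}^2_{L^2(\tilde{Q})}=\norm{M_1v}^2+\norm{M_2v}^2+2\,\mathrm{Re}(M_1v,M_2v)_{L^2(\tilde{Q})}$
and analyze the cross term. Using that $M_1$ is symmetric and $M_2$ skew-symmetric, integration by parts converts $2\,\mathrm{Re}(M_1v,M_2v)$ into the commutator term $([M_1,M_2]v,v)_{L^2(\tilde{Q})}$ plus boundary integrals over $\tilde{\Sigma}$ (there being no $t$-boundary terms, as above). The commutator is a second-order differential operator in $x$ whose coefficients carry explicit powers of $s$ and $\lambda$ together with the weight $\varphi$; collecting its dominant contributions produces, schematically, a positive gradient part essentially of the form $s\lambda\varphi\big(\lambda|\nabla\beta\cdot\nabla v|^2+D^2\beta(\nabla v,\nabla v)\big)$ and a positive zeroth-order part of order $s^3\lambda^4\varphi^3|\nabla\beta|^4|v|^2$, up to lower-order remainders. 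Extracting genuine positivity from this expression is the technical heart of the argument and the step I expect to be the main obstacle.

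To handle it I would invoke Assumption \ref{c1} (recalling $\nabla\beta=\nabla\tilde{\beta}$ and $D^2\beta=D^2\tilde{\beta}$ by \eqref{c2}): condition (3) makes the factor $\lambda|\nabla\beta\cdot\zeta|^2+D^2\beta(\zeta,\zeta)\geq\epsilon|\zeta|^2$ coercive for $\lambda$ large, yielding control of the gradient part, while condition (1), $|\nabla\beta|\geq c$, makes the zeroth-order part bounded below by $c'\,s^3\lambda^4\varphi^3|v|^2$. One first fixes $\lambda$ large enough to dominate the indefinite lower-order remainders, and then takes $s>s_0$ large so that the $s$- and $s^3$-weighted positive terms absorb all remaining error terms; in particular they control the mismatch $e^{-s\eta}\nabla w=\nabla v+s\,(\nabla\eta)v$ between $e^{-s\eta}\nabla w$ and $\nabla v$. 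Finally, since $\beta\geq r\norm{\tilde{\beta}}_{L^\infty(\Omega)}>0$ the weight $\varphi$ is bounded below by a positive constant (depending on the now-fixed $\lambda$ and on $T$), which lets me pass from the $\varphi$-weighted bounds to the clean left-hand side $s\norm{e^{-s\eta}\nabla w}^2_{L^2(\tilde{Q})^n}+s^3\norm{e^{-s\eta}w}^2_{L^2(\tilde{Q})}$.

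It remains to treat the lateral boundary integrals on $\tilde{\Sigma}$. Because $w\in L^2(-T,T;H^1_0(\Omega))$ vanishes on $\Sigma$, only the normal-derivative contributions survive, each proportional to $\varphi\,\partial_\nu\beta\,|\partial_\nu w|^2$ up to the weight $e^{-2s\eta}$. Condition (2) of Assumption \ref{c1}, namely $\partial_\nu\tilde{\beta}<0$ on $\Gamma\setminus\Gamma_*$, forces the contribution over $(\Gamma\setminus\Gamma_*)\times(-T,T)$ to have the sign that permits moving it to the left-hand side (equivalently, discarding it), so that only the integral over $\tilde{\Sigma}_*$ remains, giving the term $s\norm{e^{-s\eta}\varphi^{1/2}(\partial_\nu\beta)^{1/2}\partial_\nu w}^2_{L^2(\tilde{\Sigma}_*)}$ on the right. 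Adding $\norm{M_1v}^2+\norm{M_2v}^2$ back to recover the full left-hand side and rewriting $\norm{M_jv}^2=\norm{M_je^{-s\eta}w}^2$ then yields the stated inequality, with $s_0$ and $C_0$ determined by $\Omega$, $T$ and $\Gamma_*$ through the geometric data fixing $\lambda$.
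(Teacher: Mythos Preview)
The paper does not actually prove this proposition: it is quoted verbatim as ``the global Carleman estimate borrowed from \cite[Proposition 1]{BP}'' and used as a black box. So there is no proof in the paper to compare your proposal against.

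That said, your sketch is the standard Bukhgeim--Klibanov/Baudouin--Puel derivation (conjugate, split into symmetric and skew-symmetric parts, expand the cross term, extract positivity from the commutator via the pseudo-convexity conditions in Assumption~\ref{c1}, and use the sign of $\partial_\nu\tilde\beta$ on $\Gamma\setminus\Gamma_*$ to drop the bad boundary piece), which is precisely the route taken in \cite{BP}. The outline is correct; the only point I would flag is that the identity $2\,\mathrm{Re}(M_1v,M_2v)=([M_1,M_2]v,v)+\text{boundary}$ requires some care because $M_1$ contains the first-order-in-$t$ piece $i\partial_t$ and $M_2$ the multiplication by $is\eta'$, so the integrations by parts mixing $t$- and $x$-derivatives must be tracked explicitly rather than appealing to abstract symmetry/skew-symmetry; but this is bookkeeping, not a gap in the strategy.
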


As a corollary we have the following technical result. Its proof can be found in \cite[Section 4.1]{HKSY}, but for the sake of self-containedness and for the convenience of the reader, we give it at the end of the section. 

\begin{corollary}
\label{cor1}
Under the conditions of Proposition \ref{pr2}, we have
\begin{eqnarray*}
& & s^{-1/2} \left( \norm{e^{-s\eta} z}^2_{L^2(\tilde{Q})} + \norm{e^{-s\eta}\nabla z}^2_{L^2(\tilde{Q})^n} \right)+ \norm{e^{-s\eta(\cdot,0)} z(\cdot,0)}^2_{L^2(\Omega)} \\
& \leq & C_0 s^{-3/2} \left( s \norm{e^{-s\eta}\varphi^{1/2}(\partial_\nu\beta)^{1/2}\partial_\nu z}^2_{L^2(\tilde{\Sigma}_*)}
+ \norm{e^{-s\eta}L z}^2_{L^2(\tilde{Q})} \right),\ s>s_0,
\end{eqnarray*}
for all $w\in L^2(-T,T;H^1_0(\Omega))$ fulfilling $L w\in L^2(\tilde{Q})$ and $\partial_\nu w\in L^2(\tilde{\Sigma}_*)$,  where $C_0$ and $s_0$ are the same as in Proposition \ref{pr2}.
\end{corollary}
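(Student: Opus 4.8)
\emph{Proof plan.} The strategy is to read off the two volume terms directly from Proposition \ref{pr2} after multiplying its conclusion by $s^{-3/2}$, so that the only genuinely new ingredient is the initial-value term $\norm{e^{-s\eta(\cdot,0)}z(\cdot,0)}^2_{L^2(\Omega)}$. Throughout I identify the function $z$ appearing in the displayed inequality with the function $w$ of the hypotheses (the two symbols are used interchangeably here), and I abbreviate $v=e^{-s\eta}z$.

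First I would dispose of the volume terms. Multiplying the estimate of Proposition \ref{pr2} by $s^{-3/2}$ turns its summand $s\norm{e^{-s\eta}\nabla z}^2_{L^2(\tilde{Q})^n}$ into $s^{-1/2}\norm{e^{-s\eta}\nabla z}^2_{L^2(\tilde{Q})^n}$ and its summand $s^3\norm{e^{-s\eta}z}^2_{L^2(\tilde{Q})}$ into $s^{3/2}\norm{e^{-s\eta}z}^2_{L^2(\tilde{Q})}$. Since $s>s_0>1$ we have $s^{3/2}\geq s^{-1/2}$, hence $s^{-1/2}\norm{e^{-s\eta}z}^2_{L^2(\tilde{Q})}\leq s^{3/2}\norm{e^{-s\eta}z}^2_{L^2(\tilde{Q})}$; thus both $s^{-1/2}$ volume terms of the corollary are already bounded by $C_0 s^{-3/2}$ times the right-hand side.

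The core of the argument is the bound on $\norm{v(\cdot,0)}^2_{L^2(\Omega)}$. Since $\eta(x,t)\to+\infty$ as $t\to\pm T$ (recall that $e^{2\lambda K}-e^{\lambda\beta(x)}>0$ on $\overline{\Omega}$ because $\beta\leq K$), one has $v(\cdot,-T)=0$, so the fundamental theorem of calculus gives $\norm{v(\cdot,0)}^2_{L^2(\Omega)}=\int_{-T}^0\partial_t\norm{v(\cdot,t)}^2_{L^2(\Omega)}\,dt$. I would then use the definition $M_1=i\partial_t+\Delta+s^2\abs{\nabla\eta}^2$ to write $\partial_t v=-i\bigl(M_1 v-\Delta v-s^2\abs{\nabla\eta}^2 v\bigr)$ and compute $\partial_t\norm{v(\cdot,t)}^2_{L^2(\Omega)}=2\,\mathrm{Re}\int_\Omega\partial_t v\,\bar v\,dx$. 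The key algebraic fact is that the second-order and potential contributions disappear upon taking the real part: the term $is^2\abs{\nabla\eta}^2\abs{v}^2$ is purely imaginary, and $\int_\Omega\Delta v\,\bar v\,dx=-\int_\Omega\abs{\nabla v}^2\,dx$ is real after integrating by parts, the boundary term vanishing because $v\in H^1_0(\Omega)$ ($z$ vanishes on $\Gamma$ and $\eta$ is finite there). This leaves the clean identity $\norm{v(\cdot,0)}^2_{L^2(\Omega)}=2\int_{-T}^0\mathrm{Im}\int_\Omega M_1 v\,\bar v\,dx\,dt$.

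Finally I would estimate the right-hand side by Cauchy--Schwarz in $x$ and then in $t$, obtaining the bound $2\norm{M_1 v}_{L^2(\tilde{Q})}\norm{v}_{L^2(\tilde{Q})}$, and then by Young's inequality $2\norm{M_1 v}_{L^2(\tilde{Q})}\norm{v}_{L^2(\tilde{Q})}\leq s^{-3/2}\bigl(\norm{M_1 v}^2_{L^2(\tilde{Q})}+s^3\norm{v}^2_{L^2(\tilde{Q})}\bigr)$. Both $\norm{M_1 v}^2_{L^2(\tilde{Q})}=\norm{M_1 e^{-s\eta}z}^2_{L^2(\tilde{Q})}$ and $s^3\norm{v}^2_{L^2(\tilde{Q})}=s^3\norm{e^{-s\eta}z}^2_{L^2(\tilde{Q})}$ occur as summands of the left-hand side of Proposition \ref{pr2}, hence are dominated by $C_0$ times its right-hand side; this yields the desired $s^{-3/2}$ bound for the initial-value term, and summing the three contributions finishes the proof. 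The point requiring the most care, and the likely main obstacle, is precisely the integration-by-parts/real-part computation that cancels the $\Delta v$ and $s^2\abs{\nabla\eta}^2 v$ terms, together with the rigorous justification that $v(\cdot,\pm T)=0$ so that the time integration produces no boundary contribution.
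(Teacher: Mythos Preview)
Your proposal is correct and follows essentially the same route as the paper's proof: set $v=e^{-s\eta}z$, use $v(\cdot,-T)=0$ and the fundamental theorem of calculus, exploit the structure of $M_1$ together with integration by parts (and $v\in H_0^1(\Omega)$) to obtain $\norm{v(\cdot,0)}^2_{L^2(\Omega)}=2\,\Im\int_{(-T,0)\times\Omega}(M_1 v)\bar v\,dxdt$, then apply Cauchy--Schwarz and Young with weight $s^{-3/2}$ before invoking Proposition~\ref{pr2}. The only cosmetic difference is that you treat the volume terms first and use $v$ where the paper uses $w$; also note that you only need $v(\cdot,-T)=0$, not $v(\cdot,T)=0$.
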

\begin{proof}
Put $w=e^{-s\eta} z$. Since $\lim\limits_{t\longrightarrow -T}\eta(x,t)=+\infty$ for all $x\in\Omega$ then $\lim\limits_{t\longrightarrow -T}w(\cdot,t)=0$ in $L^2(\Omega)$ and hence
$$
\Vert w(\cdot,0)\Vert^2_{L^2(\Omega)}=\displaystyle\int_{(-T,0)\times\Omega} \partial_t \vert w(x,t) \vert^2 dx dt
=2 \Re \left(\displaystyle\int_{(-T,0)\times\Omega} (\partial_t w) \overline{w}(x,t)dx dt\right).
$$
On the other hand we have
\begin{eqnarray*}
& & \Im \left(\displaystyle\int_{(-T,0)\times\Omega} (M_1w)\overline{w}(x,t)dxdt \right) \\
& = & \Re \left( \int_{(-T,0)\times\Omega} (\partial_t w)\overline{w}(x,t)dxdt\right)+\Im\left(\displaystyle\int_{(-T,0)\times\Omega}(\Delta w +s^2\vert\nabla\eta\vert^2 w) \overline{w}(x,t)dxdt \right) \\
&= & \Re\left(\int_{(-T,0)\times\Omega} (\partial_t w) \overline{w}(x,t) dxdt \right)- \Im \left( \int_{(-T,0)\times\Omega}(\vert\nabla w\vert^2-s^2\vert\nabla\eta\vert^2\vert w\vert^2)(x,t)dxdt\right) \\
&= & \Re\left(\int_{(-T,0)\times\Omega} (\partial_t w) \overline{w}(x,t) dxdt \right),
\end{eqnarray*}
whence
$\Vert w(\cdot,0)\Vert^2_{L^2(\Omega)}=2 \Im \left(\displaystyle\int_{(-T,0)\times\Omega} (M_1w) \overline{w}(x,t)dxdt \right)$. Therefore we get
$$
\Vert e^{-s\eta(\cdot,0)} z(\cdot,0)\Vert^2_{L^2(\Omega)} \leq 2\Vert M_1 w\Vert_{L^2(\tilde{Q})}\Vert w\Vert_{L^2(\tilde{Q})}\leq s^{-3/2} \left( s^3\Vert e^{-s\eta} z \Vert^2_{L^2(\tilde{Q})}+\Vert M_1e^{-s\eta} z\Vert^2_{L^2(\tilde{Q})}\right)
$$
with the help of the Cauchy-Schwarz and H\"older inequalities. As a consequence we have
\begin{eqnarray*}
& & s^{-1/2} \left( \norm{e^{-s\eta} z}^2_{L^2(\tilde{Q})} +  \norm{e^{-s\eta} \nabla z}^2_{L^2(\tilde{Q})^n} \right)+ \norm{e^{-s\eta(\cdot,0)} z(\cdot,0)}^2_{L^2(\Omega)} \\
& \leq & s^{-3/2}\left( s\Vert e^{-s\eta}\nabla z \Vert^2_{L^2(\tilde{Q})^n} + s^3\Vert e^{-s\eta} z \Vert^2_{L^2(\tilde{Q})}
+\Vert M_1e^{-s\eta} z \Vert^2_{L^2(\tilde{Q})} \right)\\
& \leq &  C_0 s^{-3/2} \left(s\Vert e^{-s\eta}\varphi^{1/2}(\partial_\nu\beta)^{1/2}\partial_\nu\ z \Vert^2_{L^2(\tilde{\Sigma}_*)}+\Vert e^{-s\eta} Lz \Vert^2_{L^2(\tilde{Q})} \right),\ s>s_0,
\end{eqnarray*}
by Proposition \ref{pr2}, which is the desired result.
\end{proof}

\subsection{Proof of Theorem \ref{th1}}
We start by linearizing the system \eqref{a1}.
That is, we denote by $u_j=(u_j^+,u_j^-)$, $j=1,2$, the solution to \eqref{a1}, where $(A_j,p_j,q_j^\pm)$ is substituted for $(A,p,q^\pm)$ and we take the difference of the two systems \eqref{a1} associated with $j=1,2$. Thus, putting $A=A_1-A_2$, $p=p_1-p_2$ and $q^\pm=q_1^\pm-q_2^\pm$, we get that $u=(u^+,u^-)^T$, where $u^\pm=u_1^\pm-u_2^\pm$, solves
\begin{equation}
\label{p1}
 \left\{
\begin{array}{ll}
-i\partial_t u^+-\Delta u^++q^+_1u^+=-A_1\cdot\nabla u^- - p_1u^- - A\cdot\nabla u_2^- - pu_2^- - q^+u_2^+ &\textrm{in}\ Q, \\
-i\partial_t u^- -\Delta u^-+q^-_1u^-=A_1\cdot\nabla u^+ - p_1u^+ + A\cdot\nabla u_2^+ - pu_2^+ - q^-u_2^- &\textrm{in}\ Q, \\
u^+(\cdot,0)=0,\ u^-(\cdot,0)=0&\textrm{in}\ \Omega,\\
u^+=0,\ u^-=0&\textrm{on}\ \Sigma.
\end{array}
\right.
\end{equation}
Since $u^\pm \in H^2(0,T;L^2(\Omega)) \cap H^1(0,T;H^2(\Omega)\cap H^1_0(\Omega))$, we can differentiate \eqref{p1} with respect to the time-variable: We obtain that $v=\partial_t u=(v^+,v_-)$, where $v^\pm=\partial_t u^\pm\in H^1(0,T;L^2(\Omega))\cap L^2(0,T;H^2(\Omega)\cap H^1_0(\Omega))$, is solution to the following coupled system
\begin{equation}
\label{p2}
\left\{
\begin{array}{ll}
-i\partial_tv^+-\Delta v^++q^+_1v^+=-A_1\cdot\nabla v^- - p_1 v^- - A \cdot \nabla \partial_t u_2^- - p \partial_t u_2^- - q^+\partial_tu_2^+&\textrm{in}\ Q, \\
-i\partial_tv^- - \Delta v^-+q^-_1v^-=A_1\cdot\nabla v^+ - p_1 v^+ + A \cdot \nabla \partial_tu_2^+ - p \partial_t u_2^+ - q^-\partial_t u_2^-&\textrm{in}\ Q, \\
v^+(\cdot,0)=-i (A \cdot\nabla u_0^- + p u_0^- + q^+u_0^+) &\textrm{in}\ \Omega,\\
v^-(\cdot,0)=-i (-A\cdot\nabla u_0^+ + pu_0^+ + q^-u_0^-) &\textrm{in}\ \Omega,\\
v^+=0,\ v^-=0&\textrm{on}\ \Sigma.
\end{array}
\right.
\end{equation}
We extend $u^\pm_2$ on $\tilde{Q}$ by setting $u^\pm_2(x,t)=\overline{u^\pm_2(x,-t)}$ for a.e. $(x,t)\in \Omega \times (-T,0)$. Since $u_0^{\pm}$, $A$, $p$ and $q^\pm$ are all real valued, then it is easy to see that the function $v$, where $v^\pm$ is extended on $\Omega \times (-T,0)$ as $v^\pm(x,t)=-\overline{v^\pm(x,-t)}$, is solution to
\begin{equation}
\label{p3}
\left\{
\begin{array}{ll}
-i\partial_tv^+ - \Delta v^+ + q^+_1 v^+ = -A_1\cdot\nabla v^- - p_1 v^- - A\cdot\nabla\partial_t u_2^- - p\partial_tu_2^- - q^+\partial_tu_2^+ &\textrm{in}\ \tilde{Q}, \\
-i\partial_tv^--\Delta v^-+q^-_1v^-= A_1\cdot\nabla v^+ - p_1v^+ + A\cdot\nabla\partial_tu_2^+  - p\partial_tu_2^+ - q^-\partial_tu_2^-&\textrm{in}\ \tilde{Q}, \\
v^+(\cdot,0)=-i (A \cdot\nabla u_0^- + p u_0^- + q^+u_0^+) &\textrm{in}\ \Omega,\\
v^-(\cdot,0)= -i (-A\cdot\nabla u_0^+ + pu_0^+ + q^-u_0^-) &\textrm{in}\ \Omega,\\
v^+=0,\ v^-=0&\textrm{on}\ \tilde{\Sigma},
\end{array}
\right.
\end{equation}
with $\tilde{\Sigma}=\Gamma \times (-T,T)$.

Let us apply Corollary \ref{cor1} to $v^{\pm}$, where for the sake of notational simplicity we write $\mu^\pm$ instead of \\
$\Vert e^{-s\eta}\varphi^{1/2}(\partial_\nu\beta)^{1/2}\partial_\nu v^\pm \Vert^2_{L^2(\tilde{\Sigma}_*)}$.
Then, in light of \eqref{p3}, we get for all $s>s_0$ that
\begin{eqnarray*}
& & s^{-1/2} \left( \norm{e^{-s\eta} v^\pm}^2_{L^2(\tilde{Q})} + \norm{e^{-s\eta} \nabla v^\pm}^2_{L^2(\tilde{Q})^n} \right)+ \norm{e^{-s\eta(\cdot,0)}(\pm A\cdot\nabla u_0^\mp + p u_0^\mp +q^\pm u_0^\pm)}^2_{L^2(\Omega)} \nonumber \\
& \leq & C_0 s^{-3/2} \left( \Vert e^{-s\eta}A\cdot \nabla \partial_tu_2^\mp \Vert^2_{L^2(\tilde{Q})} +\Vert e^{-s\eta}p\partial_tu_2^\mp \Vert^2_{L^2(\tilde{Q})} +\Vert e^{-s\eta}q^\pm \partial_t u_2^\pm \Vert^2_{L^2(\tilde{Q})} \right. \\
& & \hspace*{1.5cm}  \left.   +\Vert e^{-s\eta} A_1\cdot\nabla v^\mp \Vert^2_{L^2(\tilde{Q})}+\Vert e^{-s\eta} p_1 v^\mp \Vert^2_{L^2(\tilde{Q})}+ s \mu^\pm \right) \\
& \leq & C_1 s^{-3/2} \left( \Vert e^{-s\eta}A\Vert^2_{L^2(\Omega)^n} +\Vert e^{-s\eta}p\Vert^2_{L^2(\Omega)} + \Vert e^{-s\eta}q^\pm \Vert^2_{L^2(\tilde{Q})}+\Vert e^{-s\eta}v^\mp\Vert^2_{L^2(\tilde{Q})} +\Vert e^{-s\eta}\nabla v^\mp \Vert^2_{L^2(\tilde{Q})} + s \mu^\pm \right),
\end{eqnarray*}
with $C_1=C_0 \max(M^2,C^2)$, where $C$ is the constant appearing in Proposition \ref{pr1}.
Indeed, in the last line we used the energy inequality \eqref{d4c}, entailing that $\norm{\partial_t u_2^\pm}_{L^\infty(\tilde{Q})} + \norm{\nabla \partial_t u_2^\pm}_{L^\infty(\tilde{Q})^n} \leq C$.
Summing up the two above estimates and recalling that $e^{-s\eta(x,t)}\leq e^{-s\eta(x,0)}$ for all $(x,t) \in \tilde{Q}$ then leads for all  $s > s_1=\max(s_0, 2C_1)$ to
\begin{eqnarray}
& & \Vert e^{-s\eta(\cdot,0)}(-A\cdot\nabla u_0^+ +pu_0^+ + q^-u_0^-)\Vert^2_{L^2(\Omega)}+\Vert e^{-s\eta(\cdot,0)}(A\cdot\nabla u_0^-+pu_0^- + q^+u_0^+)\Vert^2_{L^2(\Omega)} \nonumber \\
& \leq & C_1'  \left( 
s^{-3 /2} \left( \Vert e^{-s\eta(\cdot,0)}A\Vert^2_{L^2(\Omega)^n} + \Vert e^{-s\eta(\cdot,0)}p\Vert^2_{L^2(\Omega)} + \Vert e^{-s\eta(\cdot,0)} q^+ \Vert^2_{L^2(\Omega)} + \Vert e^{-s\eta(\cdot,0)} q^- \Vert^2_{L^2(\Omega)} \right) \right. \nonumber \\
& & \hspace*{.5cm} \left. + s^{-1/2} \left( \Vert e^{-s\eta(\cdot,0)}\varphi^{1/2}(\partial_\nu\beta)^{1/2}\partial_\nu v^-\Vert^2_{L^2(\tilde{\Sigma}_*)} + 
\Vert e^{-s\eta(\cdot,0)}\varphi^{1/2}(\partial_\nu\beta)^{1/2}\partial_\nu v^+\Vert^2_{L^2(\tilde{\Sigma}_*)} \right) \right), \label{p5}
\end{eqnarray}
where $C_1'=2C_1$.

Having established \eqref{p5}, we shall now specify $u_0^\pm$ in order to estimate $A$, $p$ and $q^\pm$. Namely we probe the system \eqref{a1} with $n+2$ initial states $u_0^k=(u_0^{+,k},u_0^{-,k})$, $k=1,\ldots,n+2$, that we shall describe below, and suitable Dirichlet boundary conditions $g^k=(g^{+,k},g^{-,k})$ fulfilling the compatibility condition \eqref{cck}. We proceed in two steps: In the first one we describe $u_0^k$ for $k=1,2$, while the initial states $u_0^k$ associated with $k=3,\ldots,n+2$, are defined in the second one. In the sequel, $u^k=(u^{+,k},u^{-,k})$ denotes the solution to \eqref{a1} associated with $(u_0^{\pm},g^\pm)=(u_0^{\pm,k},g^{\pm,k})$, we put $v^{\pm,k}= \partial_t u^{\pm,k}$ and $\mu_k^{\pm}(s)= \norm{e^{-s\eta(\cdot,0)}\varphi^{1/2}(\partial_\nu\beta)^{1/2}\partial_\nu v^{\pm,k}}^2_{L^2(\tilde{\Sigma}_*)}$, and we set  $\mu_k=\mu_k^+ + \mu_k^-$. \\

\noindent {\it Step 1.}  For $\alpha>0$ fixed, put $u_0^{+,1}=0$ and let $u_0^{-,1} : \Omega \to \C$ be constant and satisfy $\vert u_0^{-,1}\vert \geq\alpha$ a.e. in $\Omega$. Similarly we set $u_0^{-,2}=0$ and choose a constant function $u_0^{+,2} : \Omega \to \C$ such that $\vert u_0^{+,2} \vert \geq\alpha$. Then, taking $u_0^{\pm}=u_0^{\pm,1}$ in \eqref{p5}, we get for all $s>s_1$ that
\begin{eqnarray*}
& & \Vert e^{-s\eta(\cdot,0)}p\Vert^2_{L^2(\Omega)} + \Vert e^{-s\eta(\cdot,0)}q^-\Vert^2_{L^2(\Omega)} \nonumber \\
& \leq & C_1' s^{-3/2} \left( \Vert e^{-s\eta(\cdot,0)}A\Vert^2_{L^2(\Omega)^n} + \Vert e^{-s\eta(\cdot,0)}p\Vert^2_{L^2(\Omega)}  + \Vert e^{-s\eta(\cdot,0)}q^+\Vert^2_{L^2(\Omega)}+\Vert e^{-s\eta(\cdot,0)}q^- \Vert^2_{L^2(\Omega)}+ s \mu_1\right), 
\end{eqnarray*}
whereas with $u_0^\pm=u_0^{\pm,2}$, we obtain:
 \begin{eqnarray*}
& & \Vert e^{-s\eta(\cdot,0)}q^+\Vert^2_{L^2(\Omega)}+\Vert e^{-s\eta(\cdot,0)}p\Vert^2_{L^2(\Omega)} \nonumber \\
& \leq & C_1' s^{-3/2} \left( \Vert e^{-s\eta(\cdot,0)}A\Vert^2_{L^2(\Omega)^n} + \Vert e^{-s\eta(\cdot,0)}p\Vert^2_{L^2(\Omega)} + \Vert e^{-s\eta(\cdot,0)} q^+ \Vert^2_{L^2(\Omega)} + \Vert e^{-s\eta(\cdot,0)} q^- \Vert^2_{L^2(\Omega)} + s \mu_2 \right).
\end{eqnarray*}
Summing up the two above estimates then yields
\begin{eqnarray*}
& & (1 - 2 C_1' s^{-3/2}) \left( \Vert e^{-s\eta(\cdot,0)}p\Vert^2_{L^2(\Omega)}+\Vert e^{-s\eta(\cdot,0)} q^+ \Vert^2_{L^2(\Omega)} + \Vert e^{-s\eta(\cdot,0)} q^- \Vert^2_{L^2(\Omega)} \right)\\
&  \leq & 2 C_1' \left( s^{-3/2} \Vert e^{-s\eta(\cdot,0)}A\Vert^2_{L^2(\Omega)^n} + s^{-1/2} \sum_{k=1}^2 \mu_k(s) \right),\ s >s_1.
\end{eqnarray*}
Therefore for all $s > s_2=\max(s_1,(4C_1')^{2 \slash 3})$ we have
\begin{equation}
\label{p6}
\Vert e^{-s\eta(\cdot,0)}p\Vert^2_{L^2(\Omega)}+\Vert e^{-s\eta(\cdot,0)} q^+ \Vert^2_{L^2(\Omega)} + \Vert e^{-s\eta(\cdot,0)} q^- \Vert^2_{L^2(\Omega)} \leq C_2 \left( s^{-3/2} \Vert e^{-s\eta(\cdot,0)}A\Vert^2_{L^2(\Omega)} + s^{-1/2} \sum_{k=1}^2 \mu_k(s) \right),
\end{equation}
with $C_2=4C_1'$.\\

\noindent {\it Step 2.} We now choose $2n$ functions $u_0^{\pm,k+2}: \Omega \to \C$, for $k=1,\cdots,n$, such that the two matrices $(U_0^\pm)^*U_0^\pm$, where $U^{\pm}_0=(\partial_l u^{\pm,k+2}_{0})_{1\leq k,l\leq n}$ and $(U_0^{\pm})^*$ is the Hermitian conjugate of $U_0^{\pm}$, are strictly positive definite, i.e.
\begin{equation}
\label{p7}
\exists\ c^{\pm}_0 >0,\ \vert U^{\pm}_0 \xi \vert\geq c^{\pm}_0 \vert \xi \vert,\ \xi \in\mathbb{C}^n.
\end{equation}
For each $k=1,\cdots,n$, we substitute $u_0^{\pm,k+2}$ for $u_0^\pm$ in \eqref{p5} and find that 
$$
\Vert e^{-s\eta(\cdot,0)}(- A\cdot\nabla u_0^{+,k+2} +p u_0^{+,k+2} + q^-u_0^{-,k+2}) \Vert^2_{L^2(\Omega)} +
\Vert e^{-s\eta(\cdot,0)}( A\cdot\nabla u_0^{-,k+2} + p u_0^{-,k+2} + q^+ u_0^{+,k+2} ) \Vert^2_{L^2(\Omega)}
$$
can be upper bounded by
$$
C_1' s^{-3/2} \left( \Vert e^{-s\eta(\cdot,0)} A \Vert^2_{L^2(\Omega)^n} + \Vert e^{-s\eta(\cdot,0)}p \Vert^2_{L^2(\Omega)} + \Vert e^{-s\eta(\cdot,0)}q^+ \Vert^2_{L^2(\Omega)} + \Vert e^{-s\eta(\cdot,0)}q^- \Vert^2_{L^2(\Omega)} +  s \mu_{k+2}(s) \right),
$$
provided $s>s_1$. Thus, taking into account that
$$ \vert \pm A\cdot\nabla u_0^{\mp,k+2} +p u_0^{\mp,k+2} + q^\pm u_0^{\pm,k+2}  \vert^2 \geq \frac{1}{2}\vert A\cdot\nabla u_0^{\mp,k+2} \vert^2-\vert pu_0^{\mp,k+2} + q^\pm u_0^{\pm,k+2} \vert^2,\ k=1,\ldots,n,$$
we get for all $s>s_1$ that
\begin{eqnarray*}
& & \frac{1}{2} \left( \Vert e^{-s\eta(\cdot,0)} A\cdot\nabla u_0^{+,k+2} \Vert^2_{L^2(\Omega)}+\Vert e^{-s\eta(\cdot,0)}A\cdot\nabla u_0^{-,k+2}\Vert^2_{L^2(\Omega)} \right) \\
& \leq & C_1' s^{-3/2} \left( 
\Vert e^{-s\eta(\cdot,0)} A \Vert^2_{L^2(\Omega)^n} +\Vert e^{-s\eta(\cdot,0)}p\Vert^2_{L^2(\Omega)} + \Vert e^{-s\eta(\cdot,0)} q^+ \Vert^2_{L^2(\Omega)} + \Vert e^{-s\eta(\cdot,0)} q^- \Vert^2_{L^2(\Omega)} + s \mu_{k+2}(s) \right) \\
& & + \Vert e^{-s\eta(\cdot,0)}(p u_0^{+,k+2} + q^- u_0^{-,k+2} )\Vert^2_{L^2(\Omega)} + \Vert e^{-s \eta(\cdot,0)}(p u_0^{-,k+2} + q^+ u_0^{+,k+2} )\Vert^2_{L^2(\Omega)},
\end{eqnarray*}
whence
\begin{eqnarray*}
& & \Vert e^{-s\eta(\cdot,0)}A\cdot\nabla u_0^{+,k+2} \Vert^2_{L^2(\Omega)}+\Vert e^{-s\eta(\cdot,0)}A\cdot\nabla u_0^{-,k+2}\Vert^2_{L^2(\Omega)} -C_{1,k} s^{-3/2} \Vert e^{-s\eta(\cdot,0)} A \Vert^2_{L^2(\Omega)^n} \nonumber \\
& \leq & C_{1,k} \left(  \Vert e^{-s\eta(\cdot,0)} p\Vert^2_{L^2(\Omega)} + \Vert e^{-s\eta(\cdot,0)}q^+\Vert^2_{L^2(\Omega)} + \Vert e^{-s\eta(\cdot,0)}q^-\Vert^2_{L^2(\Omega)} + s^{-1/2}  \mu_{k+2}(s) \right),
\end{eqnarray*}
where $C_{1,k}= 2 \left( C_1' + \norm{u_0^{+,k+2}}_{L^\infty(\Omega)}^2 + \norm{u_0^{-,k+2}}_{L^\infty(\Omega)}^2 \right)$.
Summing up over $k=1,\cdots,n$, we obtain for all $s>s_1$ that
\begin{eqnarray*}
& & \Vert e^{-s\eta(\cdot,0)} U_0^+ A\Vert^2_{L^2(\Omega)^n}+\Vert e^{-s\eta(\cdot,0)} U_0^- A \Vert^2_{L^2(\Omega)^n} -
C_1'' s^{-3/2} \Vert e^{-s\eta(\cdot,0)}A\Vert^2_{L^2(\Omega)^n}
\\
&\leq & C_1'' \left( 
 \Vert e^{-s\eta(\cdot,0)} p \Vert^2_{L^2(\Omega)} + \Vert e^{-s\eta(\cdot,0)} q^+ \Vert^2_{L^2(\Omega)} + \Vert e^{-s\eta(\cdot,0)} q^- \Vert^2_{L^2(\Omega)} + s^{-1/2} \sum_{k=1}^n \mu_{k+2}(s) \right),
\end{eqnarray*}
with $C_1''=\sum_{k=1}^n C_{1,k}$. 
Consequently we have
\begin{eqnarray*}
&& (c_0^+ + c_0^- - C_1'' s^{-3/2} ) \Vert e^{-s\eta(\cdot,0)} A \Vert^2_{L^2(\Omega)^n} \\ 
&\leq & C_1'' 
\left( \Vert e^{-s\eta(\cdot,0)} p \Vert^2_{L^2(\Omega)} + \Vert e^{-s\eta(\cdot,0)} q^+ \Vert^2_{L^2(\Omega)} + \Vert e^{-s\eta(\cdot,0)} q^- \Vert^2_{L^2(\Omega)} + s^{-1/2} \sum_{k=1}^n \mu_{k+2}(s) \right),
\end{eqnarray*}
in virtue of \eqref{p7}.
Thus, taking $s>s_2'=\max( s_1, (2 C_1'' \slash (c_0^-+c_0^+))^{2 \slash 3})$ and setting $C_2'=2 C_1''  \slash (c_0^-+c_0^+)$, we find that
$$ 
\Vert e^{-s\eta(\cdot,0)} A \Vert^2_{L^2(\Omega)^n} \leq C_2' 
\left( \Vert e^{-s\eta(\cdot,0)} p \Vert^2_{L^2(\Omega)} + \Vert e^{-s\eta(\cdot,0)} q^+ \Vert^2_{L^2(\Omega)} + \Vert e^{-s\eta(\cdot,0)} q^- \Vert^2_{L^2(\Omega)} + s^{-1/2} \sum_{k=1}^n \mu_{k+2}(s) \right).
$$
From this and \eqref{p6} it then follows for all $s>s_*=\max(s_2,s_2',(2 C_2 C_2')^{2 \slash 3})$ that 
$$
\Vert e^{-s\eta(\cdot,0)}A\Vert^2_{L^2(\Omega)^n}+\Vert e^{-s\eta(\cdot,0)}p\Vert^2_{L^2(\Omega)}+\Vert e^{-s\eta(\cdot,0)}q^+\Vert^2_{L^2(\Omega)}+\Vert e^{-s\eta(\cdot,0)}q^-\Vert^2_{L^2(\Omega)}
\leq C_3 \sum_{k=1}^{n+2} \mu_k(s),
$$
with $C_3=(1+C_2) C_2'$. Thus, bearing in mind that $0 \leq \eta(x,0) \leq e^{2\lambda K} \slash T^2$ for all $x \in \Omega$,
we find upon setting $C_4=e^{(2 s_* e^{2\lambda K}) \slash T^2} C_3$ that
$$
\Vert A\Vert^2_{L^2(\Omega)^n}+\Vert p\Vert^2_{L^2(\Omega)}+\Vert q^+\Vert^2_{L^2(\Omega)}+\Vert q^-\Vert^2_{L^2(\Omega)}
\leq C_4 \sum_{k=1}^{n+2} \mu_k(s_*).$$
Now the result follows from this upon noticing that 
$$ \mu^{\pm,k}(s_*)= 2 \norm{ e^{-s_*\eta(\cdot,0)}\varphi^{1/2}(\partial_\nu\beta)^{1/2} \partial_\nu v^{\pm,k}}_{L^2(\Sigma_*)}^2 \leq 
C \norm{\partial_\nu v^{\pm,k}}_{L^2(\Sigma_*)}^2,\ k=1,\ldots,n+2, $$
with $C=2 \norm{e^{-\eta(\cdot,0)}\varphi^{1/2}(\partial_\nu\beta)^{1/2}}_{L^\infty(\Sigma_*)}^2 < +\infty$, since
$\varphi(x,t)=\varphi(x,-t)$, $\eta(x,t)=\eta(x,-t)$ and $v^{\pm,k+2}(x,t)=-\overline{v^{\pm,k+2}(x,-t)}$ for all $(x,t)\in \Omega \times (-T,0)$.

\section{Appendix}
\subsection{Appendix A: Symmetry}
\label{sec-aA}
To prove that $\tilde{A} \cdot\nabla$ is symmetric it is enough to see that
for all $u=(u^+,u^-)^T$ and $v=(v^+,v^-)^T$ in $(H_0^1(\Omega))^2$, that we have:
\begin{eqnarray*}
\langle \tilde{A} \cdot\nabla u , v \rangle_{L^2(\Omega)^2} & = & \int_{\Omega} A \cdot \nabla u^- \overline{v^+} dx - 
\int_{\Omega} A \cdot \nabla u^+ \overline{v^-} dx \\
& = & \int_{\Omega} (\nabla \cdot (A u^-)) \overline{v^+} dx - 
\int_{\Omega} (\nabla \cdot (A \cdot u^+) ) \overline{v^-} dx \\ 
& = & -\int_{\Omega} u^- \cdot A \overline{v^+} dx +
\int_{\Omega} u^+ \cdot A \overline{v^-} dx \\ 
& = & \int_{\Omega} u^+ \cdot \overline{A v^-} dx - \int_{\Omega} u^- \cdot \overline{A v^+} dx \\ 
& = & \langle u , \tilde{A} \cdot \nabla v \rangle_{L^2(\Omega)^2}.
\end{eqnarray*}
Notice that we used the facts that $A$ is real-valued and divergence free, i.e. that $\nabla \cdot A=0$.

\subsection{Appendix B: $\Delta^D$-bounded perturbation}
\label{sec-aB}
Each of the three operators $\tilde{p}$, $\tilde{q}$ and $\tilde{A} \cdot \nabla$ is $\Delta^D$-bounded with relative bound zero. Indeed, for all $u=(u^+,u^-)^T\in \mathcal{D}_0^2$, we have
$$ \norm{\tilde{p} u}_{L^2(\Omega)^2}^2 = \norm{p u^-}_{L^2(\Omega)}^2 + \norm{p u^+}_{L^2(\Omega)}^2
\leq 2 \norm{p}_{L^{\infty}(\Omega)}^2 \norm{u}_{L^2(\Omega)^2}^2, $$
$$ \norm{\tilde{q} u}_{L^2(\Omega)^2}^2 = \norm{q^+ u^+}_{L^2(\Omega)}^2 + \norm{q^- u^-}_{L^2(\Omega)}^2
\leq \left( \norm{q^+}_{L^{\infty}(\Omega)}^2 + \norm{q^-}_{L^{\infty}(\Omega)}^2 \right) \norm{u}_{L^2(\Omega)^2}^2 $$
and
\begin{eqnarray*}
\norm{\tilde{A} \cdot \nabla u}_{L^2(\Omega)^2}^2 
& = & \norm{A \cdot u^-}_{L^2(\Omega)}^2 + \norm{A \cdot u^+}_{L^2(\Omega)}^2\\
& \leq & \norm{A}_{L^{\infty}(\Omega)^n}^2 \left( \norm{\nabla u^+}_{L^2(\Omega)}^2+  \norm{u^-}_{L^2(\Omega)}^2 \right)\\
& \leq & \norm{A}_{L^{\infty}(\Omega)^n}^2 \int_\Omega (-\Delta u^+ \overline{u^+} - \Delta u^- \overline{u^-}) dx\\
& \leq & \norm{A}_{L^{\infty}(\Omega)^n}^2 \left( \norm{\Delta u^+}_{L^2(\Omega)} \norm{u^+}_{L^2(\Omega)} + \norm{\Delta u^-}_{L^2(\Omega)} \norm{u^-}_{L^2(\Omega)} \right)
\end{eqnarray*}
by the Cauchy-Schwarz inequality, with 
\begin{eqnarray*}
& & \norm{\Delta u^+}_{L^2(\Omega)} \norm{u^+}_{L^2(\Omega)} + \norm{\Delta u^-}_{L^2(\Omega)} \norm{u^-}_{L^2(\Omega)} \\
& \leq & \norm{A}_{L^{\infty}(\Omega)^n}^2 \left( \frac{\epsilon}{\norm{A}_{L^{\infty}(\Omega)^n}^2} \left( \norm{\Delta u^+}_{L^2(\Omega)}^2 +  \norm{\Delta u^-}_{L^2(\Omega)}^2 \right) +  \frac{\norm{A}_{L^{\infty}(\Omega)^n}^2}{\epsilon} \left( \norm{u^+}_{L^2(\Omega)}^2  + \norm{u^-}_{L^2(\Omega)}^2 \right) \right)
\end{eqnarray*}
uniformly in $\epsilon \in (0,1)$, upon applying the H\"older inequality, whence
\begin{eqnarray*}
\norm{\tilde{A} \cdot \nabla u}_{L^2(\Omega)^2}^2
& \leq & \epsilon \norm{\Delta^D u}_{L^2(\Omega)^2}^2 + \frac{\norm{A}_{L^{\infty}(\Omega)^n}^4}{\epsilon} \norm{u}_{L^2(\Omega)^2}^2.
\end{eqnarray*}

\section*{Acknowledgments}
A. K. is supported by the Austrian Science Fund (FWF) under Project No. M 2310-N32.
\'E. S. is partially supported by the Agence Nationale de la Recherche (ANR) under grant ANR-17-CE40-0029.


\bigskip \bigskip

\noindent {\bf Andrii Khrabustovskyi} \\
Technische Universit\"at Graz, Institut f\"ur Angewandte Mathematik, Steyrergasse 30 , 8010 Graz, Austria. \\
E-mail: khrabustovskyi@math.tugraz.at.
\bigskip

\noindent {\bf Imen Rassas} \\
Universit\'e de Tunis El Manar, \'Ecole Nationale d'Ing\'enieurs de Tunis, LAMSIN, BP 37, Tunis Le Belv\'ed\`ere, Tunisia.\\
E-mail: imen.rassass@gmail.com. 
\bigskip

\noindent {\bf \'Eric Soccorsi} \\
Aix-Marseille Univ, Universit\'e de Toulon, CNRS, CPT, Marseille, France. \\
E-mail: eric.soccorsi@univ-amu.fr.

\end{document}